\numberwithin{equation}{section}
\newtheorem{thm}{Theorem}[section]
\newtheorem{lemma}[thm]{Lemma}
\newtheorem{prop}[thm]{Proposition}
\newtheorem{definition}[thm]{Definition}
\newtheorem{remark}[thm]{Remark}
\newtheorem{exam}[thm]{Example}
\begin{document}
\setlength{\parindent}{0em}
\title{On a diffusion equation with rupture}

\author{Yoshikazu Giga}
\address{Graduate School of Mathematical Sciences\\
University of Tokyo\\  3-8-1 Komaba Meguro-ku\\   Tokyo 153-8914 \\ Japan}
\email{labgiga@ms.u-tokyo.ac.jp}

\author{Yuki Ueda}
\address{Graduate School of Mathematical Sciences\\
University of Tokyo\\  3-8-1 Komaba Meguro-ku\\   Tokyo 153-8914 \\ Japan}
\email{yukiueda@ms.u-tokyo.ac.jp}

\begin{abstract}
We propose a model to describe an evolution of a bubble cluster with rupture.
 In a special case, the equation is reduced to a single parabolic equation with evaporation for the thickness of a liquid layer covering bubbles.
 We postulate that a bubble collapses if this liquid layer becomes thin.
 We call this collapse a rupture.
 We prove for our model that there is a periodic-in-time solution if the place of rupture occurs only in the largest  bubble.
 Numerical tests indicate that there may not exist a periodic solution if such an assumption is violated.
\end{abstract}

\renewcommand{\thefootnote}{\fnsymbol{footnote}}
\footnote[0]{AMS Subject Classifications: 35B10, 35K05, 35Q99.}
\renewcommand{\thefootnote}{\arabic{footnote}}

\date{}
\maketitle


\bibliographystyle{plain}

\section{Introduction} \label{S1} 
\subsection{Setting of the problem} \label{S1S1} 

We consider a cluster of bubbles which occupies a domain in a plane $\mathbf{R}^2$ whose boundary is the graph of a function $h$.
 We postulate that a bubble touches the graph $y=h(x)$ (from below) on a fixed horizontal region and that the difference of slopes of adjacent bubbles is fixed.
 We further assume that $h$ is periodic with period $\omega$ to simplify the problem.
 To fix the idea, let $U_k$ denote the $k$-th bubble on $(a_k,a_{k+1})$ whose boundary contains $\left\{y=h(x) \bigm| a_k<x< a_{k+1}\right\}$, where $\{a_k\}_{k=1}^K$ is a division of $[0,\omega)$, i.e., $0\leq a_1<a_2<\cdots<\omega$;
 see Figure \ref{Fb}.
 The index is taken modulo $K$ so that $a_{1+K}=a_1$.
\begin{figure}[htb]
\centering
\includegraphics[width=7.5cm]{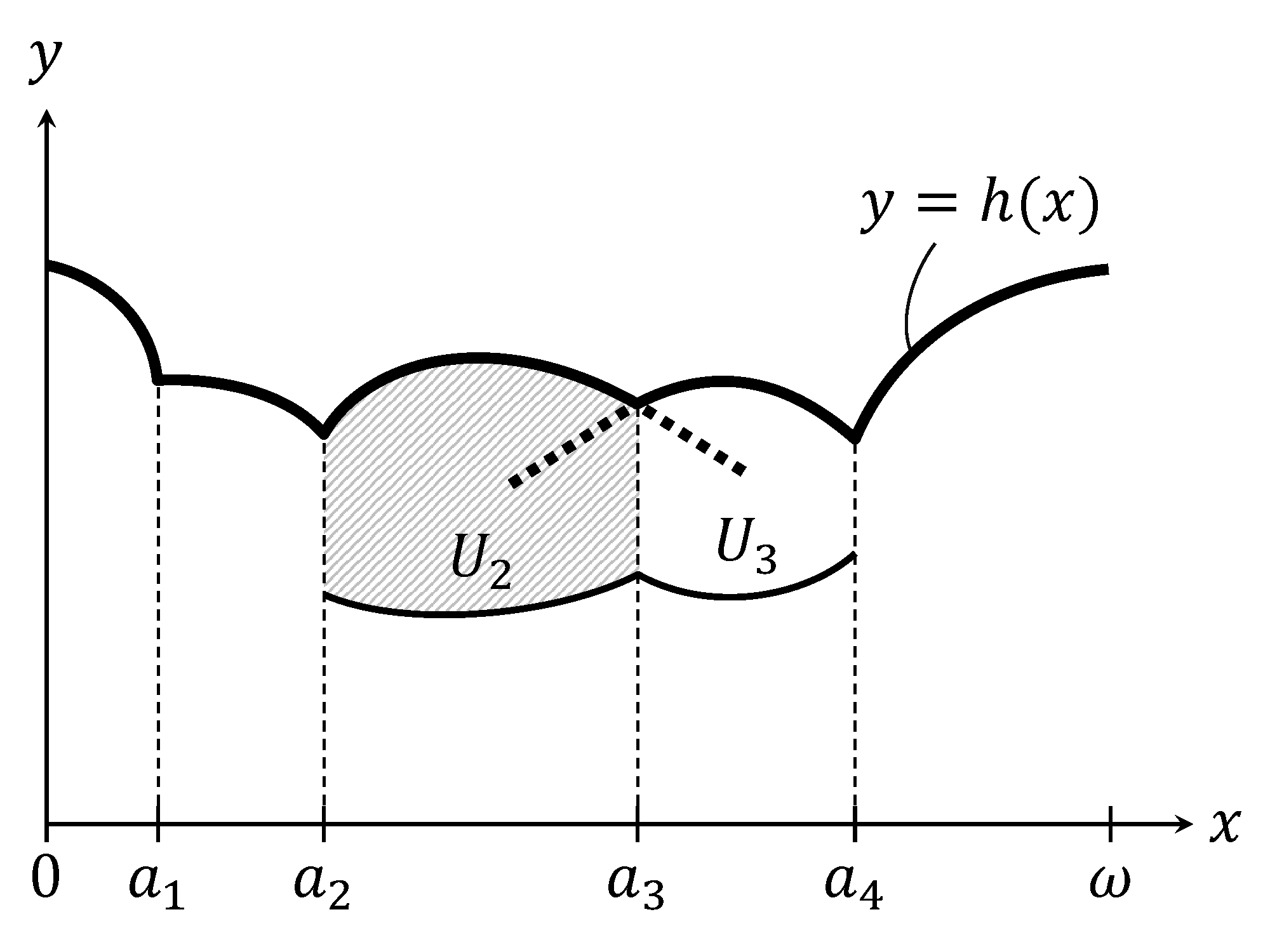}
\caption{bubbles and the graph} \label{Fb}
\end{figure}
The angle condition is denoted as 
\begin{equation} \label{Ean}
	h_x(a_k+0) = h_x(a_k-0) + c_k \quad
	k = 1, 2, \ldots, K,
\end{equation}
where $c_k$ is a given constant and $h_x$ denotes the derivative of $h$;
 here $g(x\pm0)$ is the directional limit defined by
\[
	g(x\pm0) = \lim_{\delta\downarrow0} g(x\pm\delta).
\]
We rather consider that bubbles are moving by a kind of relaxation dynamics.
 Specifically, we postulate that $h=h(x,t)$ is assumed to fulfill a diffusion equation
\[
	\tau h_t = \sigma_1 h_{xx} + A \quad\text{in}\quad
	(a_k, a_{k+1}),\ k=1, \ldots, K,
\]
where $\tau>0$ is a relaxation parameter and $\sigma_1>0$ corresponds a surface tension.
 Here a constant $A$ is to be determined later.
 If we include \eqref{Ean}, then our equation on $\mathbf{T}=\mathbf{R}/(\omega\mathbf{Z})$ becomes
\begin{equation} \label{Eh1}
	\tau h_t = \sigma_1 h_{xx} - f, \quad
	f = \sum_{k=1}^K c_k \delta(x-a_k) - A,
\end{equation}
where $\delta$ denotes Dirac's delta function.
 It is noteworthy that $\int_\mathbf{T}h\,dx$ is conserved if we choose
\begin{equation} \label{Eh2}
	A = \sum_{k=1}^K c_k / \omega.
\end{equation}

On the top of each bubble, we postulate there is a layer of liquid protect a bubble from rupture.
 If thickness of the layer becomes thinner than a given threshold value, we expect the bubble collapse.
 To describe this phenomenon, let $\zeta=\zeta(x,t)$ denote the height of liquid surface so that $\eta=\zeta-h$ describes the (vertical) thickness of the liquid layer;
 see Figure \ref{Fgg}.
\begin{figure}[htb]
\centering
\includegraphics[width=6cm]{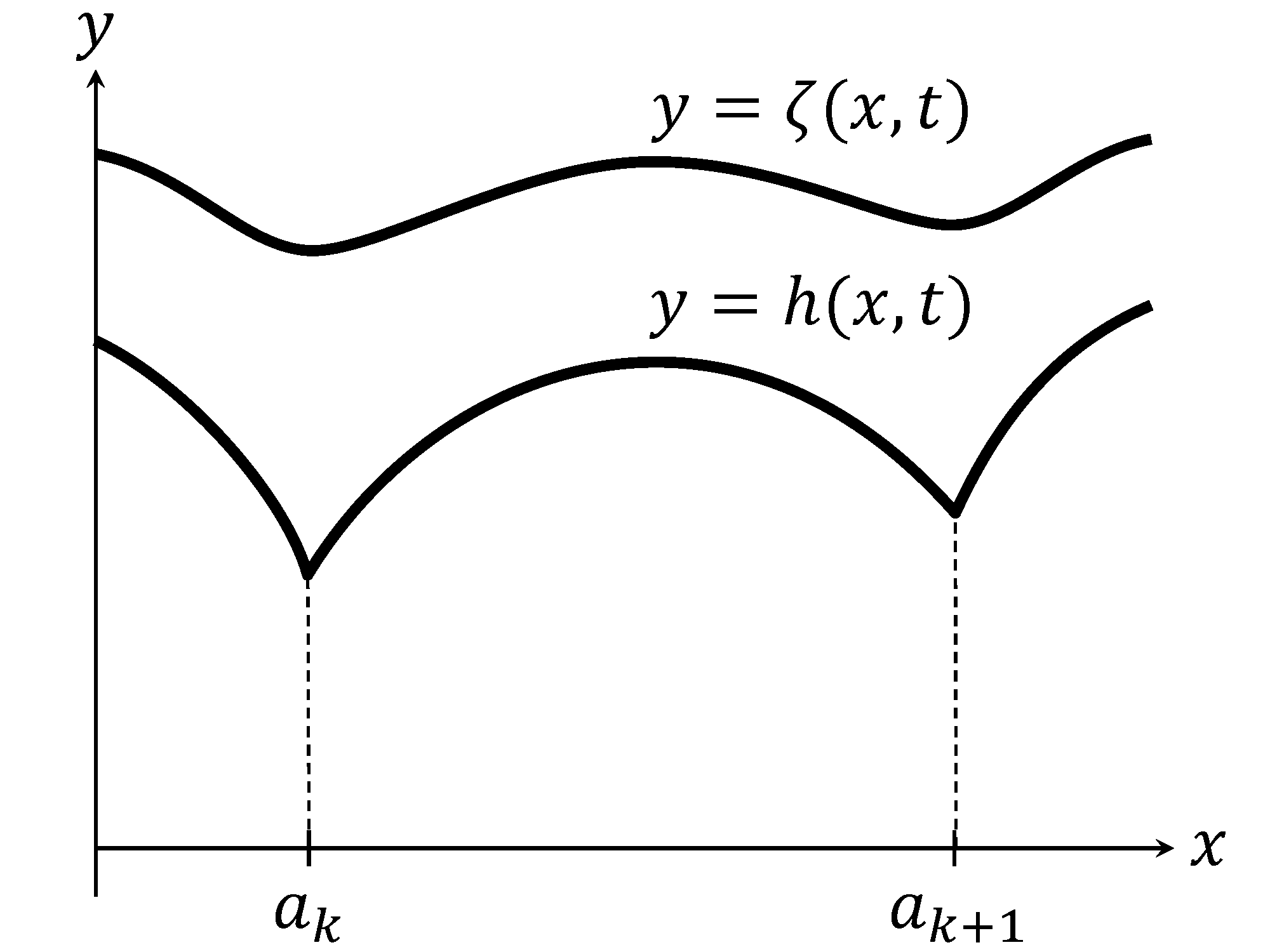}
\caption{functions $\zeta$ and $h$ at time $t$} \label{Fgg}
\end{figure}
We postulate that $\zeta$ satisfies a kind of relaxation dynamics
\begin{equation} \label{Eze}
	\zeta_t = \sigma_2 \zeta_{xx} - \alpha(\zeta-h), \quad
	\alpha \geq 0, \quad \sigma_2>0
\end{equation}
so that $\zeta$ would like to become flat.
 The last term including $\alpha$ describe evaporation of a liquid.

For a given threshold value $\eta_c>0$, we postulate that if $\eta=\zeta-h$ decreases to the value $\eta_c$ at the first time $t_0$ in some point $[a_k,a_{k+1})$, then we restart $\eta$ at $t_0+0$ as
\begin{equation} \label{Eth1}
		\eta(x,t_0+0) = \left \{ 
		\begin{array}{ll}
			\eta_a \quad\text{for} \quad x \in [a_k,a_{k+1}), \\
			\eta(x,t_0-0) \quad\text{for} \quad x \not\in [a_k,a_{k+1}).
		\end{array}
		\right.
\end{equation}
Here $\eta_a$ is a given positive constant larger than $\eta_c$.
 Moreover, we set 
\begin{equation} \label{Eth2}
		h(x,t_0+0) = \left \{
		\begin{array}{l}
			h(x,t_0-0) -d \quad\text{for} \quad x \in [a_k,a_{k+1}), \\
			h(x,t_0-0).
		\end{array}
		\right.
\end{equation}
Here $d>0$ is a fixed number and it roughly describes the vertical side length of the collapsed bubble;
 see Figure \ref{Frup}.
\begin{figure}[htb]
\centering
	\begin{minipage}[b]{0.49\linewidth}
\centering
\includegraphics[width=5.8cm]{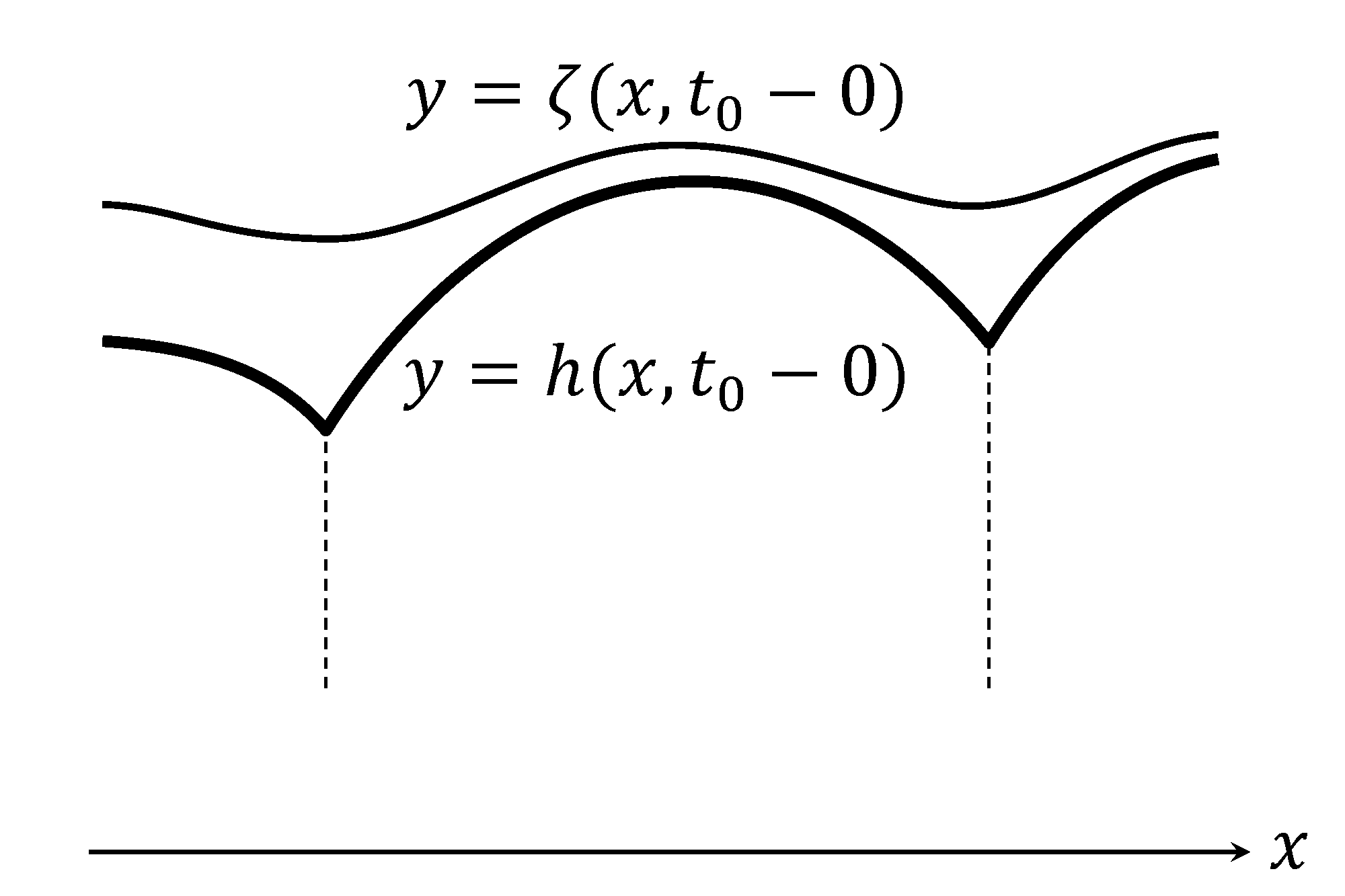}
	\end{minipage}
	\begin{minipage}[b]{0.49\linewidth}
\centering
\includegraphics[width=5.8cm]{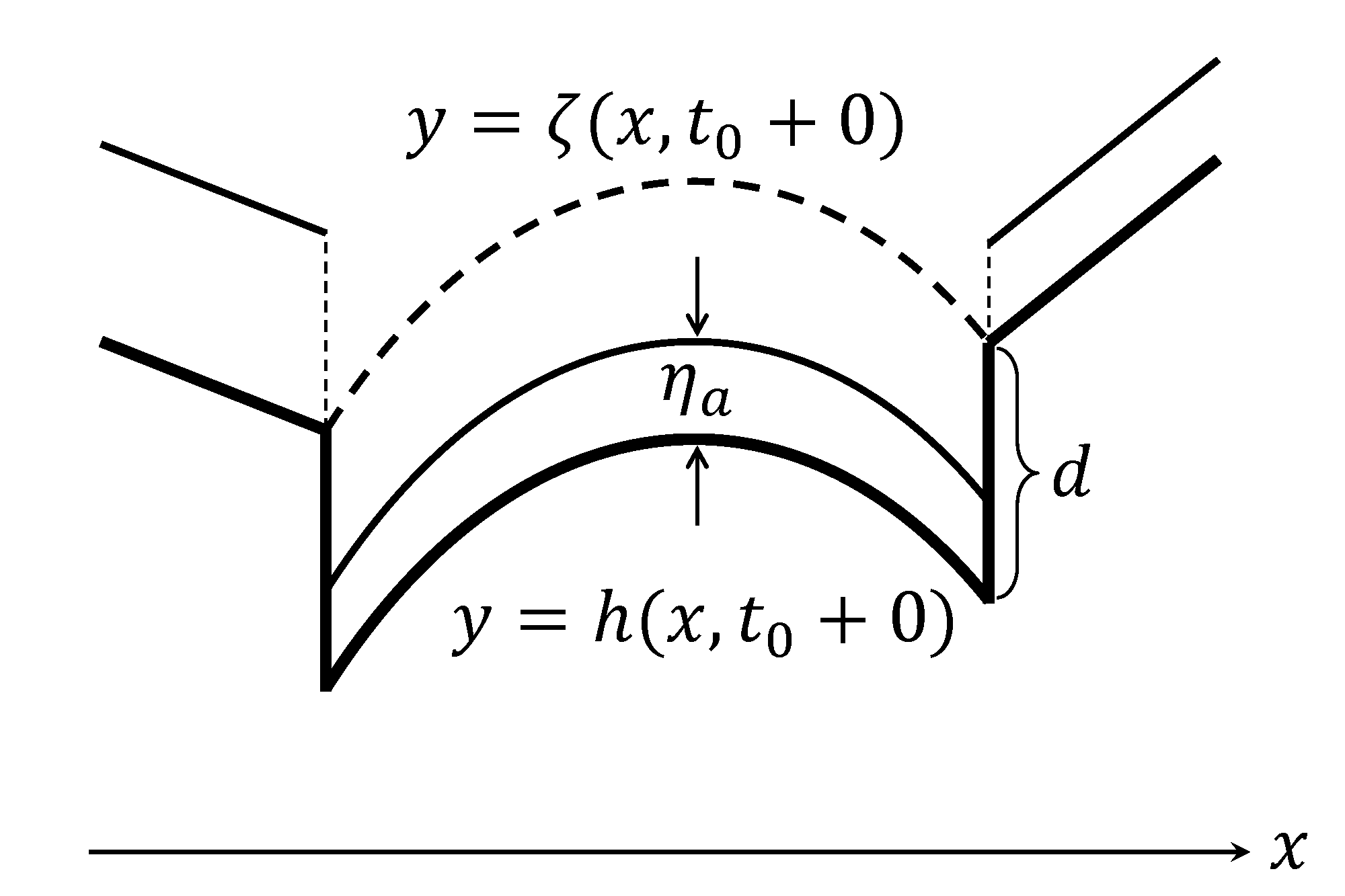}
	\end{minipage}
\caption{at the time of rupture} \label{Frup}
\end{figure}
%

\subsection{Goal of this paper} \label{S1S2} 

We are interested in whether there exists a solution whose profile is time periodic under the assumption that $c_k>0$ for all $k=1,2,\ldots,K$.
 Our first goal is to prove that there exists a time-periodic solution $\eta$ representing the thickness if (i) $\sigma_1/\tau=\sigma_2$ or (i\hspace{-0.1em}i) $\tau=\infty$ under conditions which guarantee that a rupture occurs only in one fixed subinterval $(a_i, a_{i+1})$.
 The time rupture occurs is called a rupture time (cf.\ Definition \ref{Drup}).
 In the cases (i), (i\hspace{-0.1em}i), the equation is reduced to a single equation for $\eta$.
 The existence can be proved by applying a Schauder's fixed point theorem (cf.\ \cite[Theorem 11.1]{GT}) for a mapping from a profile at some rupture time to the profile of the next rupture time.

The second goal is to give some numerical experiments and observe the role of parameter $\sigma_1$, $\sigma_2$ by fixing $\tau=1$ which may not satisfy $\sigma_1/\tau=\sigma_2$.
 From numerical experiments, there may not exist a time-periodic solution when $\sigma_1/\tau=\sigma_2$ is not fulfilled.
 We consider the case $\sigma_1=\sigma_2$ with $\tau=1$.
 In the case when the existence of periodic solution is proved, we check its stability numerically.
 If the dynamics is strongly order-preserving, the uniqueness of a periodic solution can be proved by an abstract theory \cite{OM}.
 Although, the equation for $\eta$ has an order-preserving property, the theory of \cite{OM} does not apply to our setting because of ruptures.
 From numerical experiments, it looks that our periodic solution is globally stable.
 In the case a rupture may happen in several subintervals, we check numerically whether a periodic solution may exist.
 From numerical experiments, we conjecture that there is a chance that there is no periodic solution.

Here is a sketch of the proof of the existence of a periodic solution.
 To apply the fixed point theory, we first check that the first rupture time is well controlled both from above and below.
 This is also useful to prove the compactness of the mapping since there exists a still regularizing effect for the equation of $\eta$ even if there is a singular term like $\delta(x-a_k)$.
 To show the compactness of the mapping is important step and this is done by an explicit formula of a solution of the heat equation with an extend force term.

\subsection{Related problems} \label{S1S3} 

As far as the authors know, there are few papers in mathematical community handling evolution of bubble clusters with rupture.
 In \cite{SS}, a numerical way is given to calculate evolution of bubble clusters with rupture and drainage as well as rearrangement.
 Their bubbles moves by the Navier-Stokes equations but they assume each bubble has a microscopic layer of liquid.
 If this layer becomes thinner than critical thickness, a rupture occurs.
 The evolution of liquid layer is assumed to satisfy a thin-film model of a fluid.
 In their model, inner bubble may rupture.
 In our model, we consider a simple setting so that it can be handled in a rigorous way.
 We consider the situation where rupture only occurs on the boundary of the domain where the cluster occupies;
 see Figure \ref{Fclu}.
\begin{figure}[htb]
\centering

\includegraphics[width=5cm]{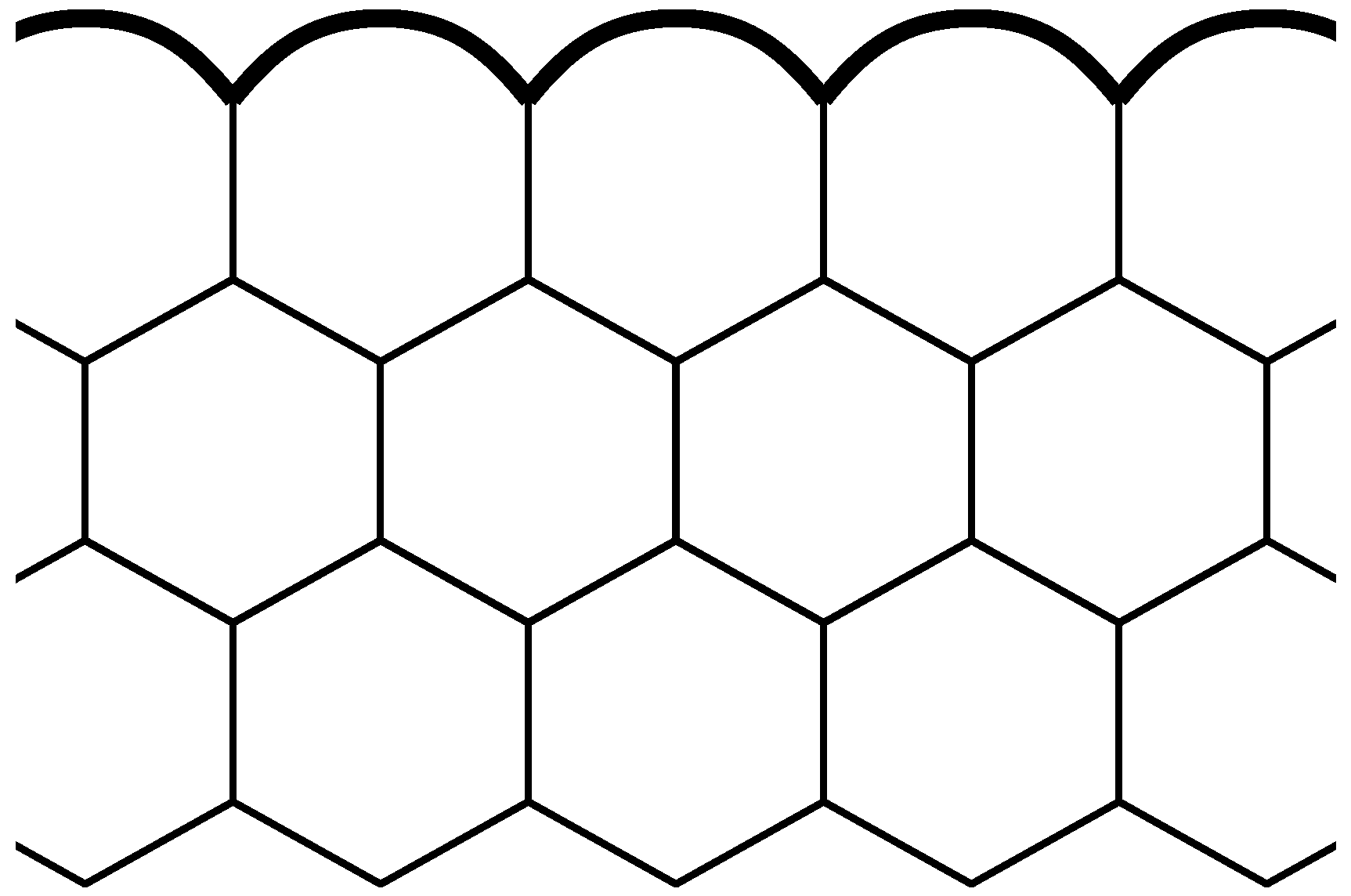}
\caption{clusters} \label{Fclu}
\end{figure}
It is assumed to located at the top in Figure \ref{Fclu}.
 Instead of working thin film equations, we assume that it is just a diffusion equation for $\zeta$.
 The motion of cluster itself is simplified.
 It just slowly moves by a relaxation dynamic like a curvature flow.
 In our setting, we only assume that the top is moving by a diffusion equation.

If one only considers evolution of a cluster of bubbles just by a relaxation dynamics like motion by mean curvature (without liquid layer), it is well studied both as a strong solution (see e.g.\ \cite{MNT}) or a weak solution allowing disappearance of some bubbles (see e.g.\ \cite{EO}).
 This model is often called a multi-grain model and each bubble is called a grain.
 The process including disappearance of some grains is often called a coarsening phenomenon.
 However, this phenomenon is not an occurrence of rupture since the grain boundary does not suddenly disappear.

\subsection{Organization of this paper} \label{S1S4} 
This paper is organized as follows.
 In Section \ref{S2}, we give definition of a rupture time for thickness $\eta$.
 We estimate a rupture time both from above and below when $\alpha>0$ when the system \eqref{Eh1} and \eqref{Eze} is reduced to a single equation for $\eta$.
 In Section \ref{S3}, we prove that a periodic solution exists under some condition which guarantees that a rupture occurs only in one fixed subinterval $(a_i,a_{i+1})$.
 In Section \ref{S4}, we give numerical results.

\section{A single diffusion equation with rupture} \label{S2} 

We are interested in the evolution of thickness $\eta$ when $\zeta$ and $h$ satisfies \eqref{Eze} and \eqref{Eh1} with \eqref{Eh2}, respectively.
 By a direct calculation, we see that
\begin{align*}
	\eta_t = (\zeta-h)_t &= \sigma_2 \zeta_{xx} - \alpha\eta - \frac{\sigma_1}{\tau} h_{xx} + \frac{f}{\tau} \\
	&= \sigma_2 \eta_{xx} - \alpha\eta + \frac{f}{\tau} + \left(\sigma_2 - \frac{\sigma_1}{\tau} \right) h_{xx}.
\end{align*}
In the case $\tau=0$ so that $h_{xx}=f/\sigma_1$, we proceed
\[
	\eta_t = \sigma_2 \eta_{xx} - \alpha \eta + \sigma_2 h_{xx}
	= \sigma_2 \eta_{xx} - \alpha \eta + \frac{\sigma_2}{\sigma_1} f.
\]
With a special choice of parameters, the evolution equation for $\eta$ is decoupled.
\begin{prop} \label{Peq}
Assume that $\zeta$ and $h$ satisfy \eqref{Eze} and \eqref{Eh1}
respectively.
 Let $\eta=\zeta-h$.
\begin{enumerate}
\item[(i)] If $\sigma_2=\sigma_1/\tau$ in \eqref{Eze} and \eqref{Eh1}, then
\[
	\eta_t = \sigma_2 \eta_{xx} - \alpha\eta + \frac{f}{\tau}.
\]
\item[(i\hspace{-0.1em}i)] If $\tau=0$ in \eqref{Eh1}, then
\[
	\eta_t = \sigma_2 \eta_{xx} - \alpha\eta + \frac{\sigma_2}{\sigma_1} f.
\]
\end{enumerate}
\end{prop}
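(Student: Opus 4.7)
The plan is that Proposition \ref{Peq} is a direct calculation which has essentially been carried out in the paragraph immediately preceding its statement; what remains is to organize the algebra and to interpret carefully the degenerate case $\tau=0$ in \eqref{Eh1}.

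First I would record the general identity that holds without any restriction on the parameters. Starting from $\zeta_t=\sigma_2\zeta_{xx}-\alpha(\zeta-h)$ and $\tau h_t=\sigma_1 h_{xx}-f$, I would subtract the second (divided by $\tau$) from the first and use $\zeta_{xx}=\eta_{xx}+h_{xx}$ to obtain
\begin{equation*}
\eta_t=\sigma_2\eta_{xx}-\alpha\eta+\frac{f}{\tau}+\Bigl(\sigma_2-\frac{\sigma_1}{\tau}\Bigr)h_{xx},
\end{equation*}
which is exactly the displayed computation in the excerpt. Part (i) is then immediate: the hypothesis $\sigma_2=\sigma_1/\tau$ makes the coefficient of $h_{xx}$ vanish, so the last term drops out and we are left with the asserted equation.

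For part (ii), the statement $\tau=0$ in \eqref{Eh1} should be understood as the singular limit in which the left-hand side $\tau h_t$ is replaced by $0$, leaving the elliptic identity $\sigma_1 h_{xx}=f$, i.e., $h_{xx}=f/\sigma_1$. I would insert this into the original (pre-subtraction) calculation
\begin{equation*}
\eta_t=\sigma_2\zeta_{xx}-\alpha\eta-\sigma_1 h_{xx}/\tau+f/\tau
\end{equation*}
written in the form $\eta_t=\sigma_2\eta_{xx}-\alpha\eta+\sigma_2 h_{xx}$ (valid when $\tau h_t=0$), so that the $1/\tau$ terms never appear. Substituting $h_{xx}=f/\sigma_1$ then yields $\eta_t=\sigma_2\eta_{xx}-\alpha\eta+(\sigma_2/\sigma_1)f$, as required.

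There is no real obstacle here; the only point that requires a sentence of care is the reinterpretation of \eqref{Eh1} when $\tau=0$, because dividing by $\tau$ is not literally allowed. Handling that cleanly (by doing the subtraction before dividing, or equivalently by treating $\tau=0$ as prescribing $h$ through the elliptic relation $\sigma_1 h_{xx}=f$) is all that is needed.
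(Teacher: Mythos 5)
Your proposal is correct and follows essentially the same route as the paper's own inline computation, which appears in the paragraph immediately before the proposition is stated: subtract the two evolution equations, split $\zeta_{xx}=\eta_{xx}+h_{xx}$, and observe that the cross term $(\sigma_2-\sigma_1/\tau)h_{xx}$ vanishes in case (i), while in case (ii) one uses the elliptic relation $\sigma_1 h_{xx}=f$ (with $h_t=0$) rather than dividing by $\tau$. The one small imprecision is your parenthetical ``valid when $\tau h_t=0$'': when $\tau=0$ that condition is vacuous, and what the step really needs is $h_t=0$, which is what your preferred reading of $\tau=0$ as the elliptic prescription of $h$ delivers; stating it that way would make the sentence airtight.
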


In the case $\sigma_2=\sigma_1/\tau$ or $\tau=0$, our problem is reduced to the evolution equation of the form
\begin{equation} \label{Eeta}
	\eta_t = \sigma\eta_{xx} - \alpha\eta + f, \quad
	f = \sum_{k=1}^K c_k \delta(x-a_k) - A
\end{equation}
with $\sigma=\sigma_2$ and possibly different values of $c_k$'s.
 Its stationary solution (time-independent solution) $\eta=s(x)$ is easy to find.
 It must satisfy
\begin{align*}
	&\sigma s_{xx} - \alpha s = A \quad\text{in}\quad (a_k, a_{k+1}), \\
	&s_x (a_k+0) = s_x(a_k-0) - c_k/\sigma \quad\text{for}\quad k = 1,\ldots,K.
\end{align*}
In particular, $s$ is 
of the form
\[
	s(x) = -\frac{A}{\alpha} + b_+ e^{\lambda x} + b_- e^{-\lambda x}, \quad
	\lambda = \sqrt{\alpha/\sigma}
\]
with constants $b_\pm$
in each $(a_k,a_{k+1})$ whose derivative jumps at $a_k$.
 Such a solution uniquely exists for $x>0$; see Figure \ref{Fst}.
\begin{figure}[htb]
\centering
\includegraphics[width=7cm]{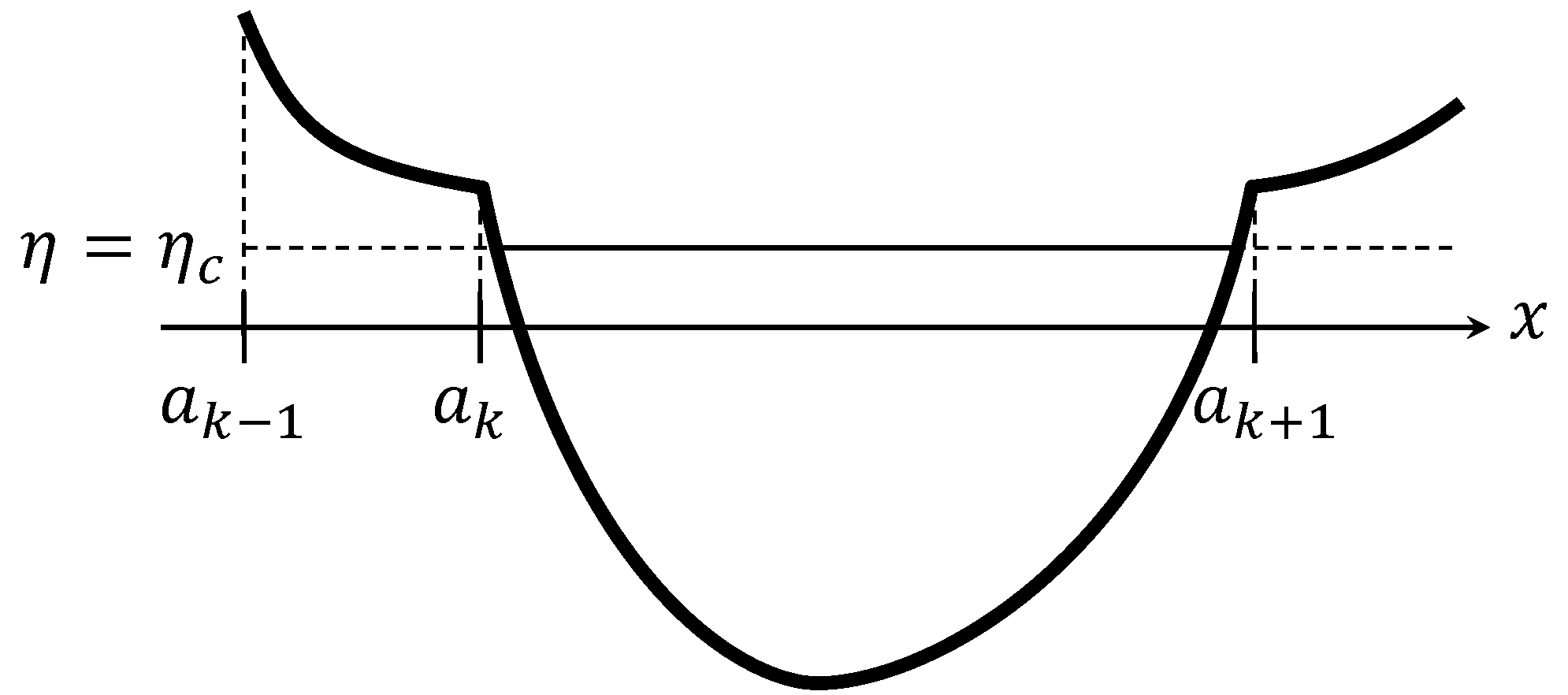}
\caption{graph of $s$} \label{Fst}
\end{figure}
 If $\alpha=0$, a stationary solution exists if and only if \eqref{Eh2} holds and it is quadratic in each $(a_k,a_{k+1})$.
 If we consider $\bar{\eta}=\eta-s$, then $\bar{\eta}$ satisfies
\[
	\bar{\eta}_t = \sigma\bar{\eta}_{xx} - \alpha\bar{\eta}.
\]
Since we impose the periodic boundary condition, in the case $\alpha=0$, $\bar{\eta}$ does not converge to zero as $t\to\infty$ provided that the average of $\bar{\eta}$ on $[0,\omega)$ is not zero.
 This indicates that there is a chance that collapse of bubbles may not occur when $\alpha=0$.
 If the evaporation parameter $\alpha>0$, then bubbles may collapse.
 For later convenience, we give a definition of rupture time.
\begin{definition} \label{Drup}
Let $\eta_c>0$ be a threshold value.
 Let $\eta$ be a solution of \eqref{Eeta} with initial data $\eta_0=\eta(x,0)>\eta_c$.
 The time
\[
	t_r(\eta_0) := \sup \left\{ t \biggm|
	\inf_{x\in\mathbf{T}} \eta(x,t) > \eta_c \right\}
\]
is said to be a \emph{rupture time} for $\eta_0$.
\end{definition}
\begin{prop} \label{Prup1}
Assume that $\int_0^\omega f\,dx\leq0$.
 Then, the rupture time $t_r$ is finite provided that $\alpha>0$.
 Moreover,
\[
	t_r \leq - \frac1\alpha \log \left( \frac{\eta_c}{\frac1\omega \int_0^\omega \eta_0\, dx} \right).
\]
\end{prop}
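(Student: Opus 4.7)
The plan is to reduce the statement to an ODE inequality for the spatial average of $\eta$. Set
\[
M(t) := \frac{1}{\omega}\int_0^\omega \eta(x,t)\,dx.
\]
Integrating \eqref{Eeta} over the torus and using periodicity of $\eta_x$ to discard the $\sigma\eta_{xx}$ contribution, I would obtain
\[
M'(t) = -\alpha M(t) + \frac{1}{\omega}\int_0^\omega f\,dx.
\]
By the hypothesis $\int_0^\omega f\,dx\leq0$, this yields the differential inequality $M'(t)\leq-\alpha M(t)$, so Gronwall gives $M(t)\leq M(0)e^{-\alpha t}$.

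Next I would compare $M(t)$ with the threshold. By Definition \ref{Drup}, for every $t<t_r$ one has $\inf_x\eta(x,t)>\eta_c$, hence $M(t)>\eta_c$. Combined with the Gronwall estimate this forces
\[
\eta_c < M(0) e^{-\alpha t}\quad\text{for all } t<t_r,
\]
and taking the supremum over such $t$ gives the claimed bound
\[
t_r \leq -\frac{1}{\alpha}\log\!\left(\frac{\eta_c}{M(0)}\right),
\]
which is finite (and positive, since $\eta_0>\eta_c$ implies $M(0)>\eta_c$).

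The only real subtlety is justifying the integration step, because $f$ contains the delta measures $\sum_k c_k\delta(x-a_k)$ and $\eta$ is only piecewise smooth across the points $a_k$. I would handle this by interpreting \eqref{Eeta} in the distributional sense: the pairing $\int_0^\omega f\,dx$ equals $\sum_{k=1}^K c_k - A\omega$, which is a genuine number, and $\int_0^\omega\eta_{xx}\,dx$ vanishes because it is the total jump of the periodic function $\eta_x$ over the period minus the contribution of the delta singularities, which exactly cancels $\sum_k c_k$ appearing in $f$ when one works with the classical derivative of $\eta$ on each subinterval. Equivalently, testing \eqref{Eeta} against the constant $1\in C^\infty(\mathbf{T})$ immediately gives the desired ODE for $M(t)$ without any formal derivative of a measure. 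Once this identity is in hand, the remainder of the argument is the elementary Gronwall step above.
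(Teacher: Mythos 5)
Your proof is correct and takes essentially the same route as the paper's: integrate \eqref{Eeta} over the period, use $\int_0^\omega f\,dx\leq0$ to derive $\frac{d}{dt}\int_0^\omega\eta\,dx\leq-\alpha\int_0^\omega\eta\,dx$, apply Gronwall to get exponential decay of the spatial average, and note that $\inf_x\eta(x,t)>\eta_c$ fails once the average has dropped to $\eta_c$. The paper's proof is exactly this, stated more tersely; your extra paragraph on the distributional pairing and the cancellation of the jump terms in $\eta_x$ against the delta measures in $f$ is a helpful clarification of a point the paper leaves implicit, and it is correct.
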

\begin{proof}
We integrate \eqref{Eeta} on $(0,\omega)$ to get
\[
	\frac{d}{dt} \int_0^\omega \eta\, dx \leq -\alpha \int_0^\omega \eta\, dx
\]
i.e.,
\[
	\frac1\omega \int_0^\omega \eta\,dx
	\leq e^{-\alpha t} \left( \frac1\omega \int_0^\omega \eta_0\, dx \right).
\]
If the average $\frac1\omega \int_0^\omega \eta\,dx(t)\leq\eta_c$, we see $t_r\leq t$.
 The desired estimate now follows.
\end{proof}

We next give an estimate of the rupture time from below.
\begin{prop} \label{Prup2}
Assume that $c_k$ is nonnegative for all $k=1,2,\ldots,K$ and $A\geq0$.
 If $\inf\eta_0>\eta_c$, then
\[
	t_r \geq t_* := \frac1\alpha \log \left[ \left(\frac{A}{\sigma} + \inf\eta_0\right) \biggm/
	\left(\frac{A}{\alpha} + \eta_c \right) \right] > 0.
\]
\end{prop}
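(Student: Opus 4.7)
The plan is to compare $\eta$ from below with a spatially constant subsolution produced by the scalar ODE obtained from \eqref{Eeta} after dropping the Dirac masses (which have nonnegative coefficients by hypothesis). Concretely, let $v=v(t)$ be the unique solution of
\[
	v'(t) = -\alpha v(t) - A, \quad v(0) = \inf \eta_0,
\]
so that
\[
	v(t) = -\frac{A}{\alpha} + \left( \frac{A}{\alpha} + \inf \eta_0 \right) e^{-\alpha t}.
\]
Viewed as a constant in $x$, $v$ satisfies $v_t - \sigma v_{xx} + \alpha v = -A$, while $\eta$ satisfies $\eta_t - \sigma \eta_{xx} + \alpha \eta = f = \sum_k c_k \delta(x-a_k) - A$ in the distributional sense.

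The first step is to form $w := \eta - v$, note that $w(\cdot,0) = \eta_0 - \inf\eta_0 \geq 0$, and observe that
\[
	w_t - \sigma w_{xx} + \alpha w = \sum_{k=1}^K c_k \delta(x - a_k) \geq 0
\]
on $\mathbf{T}$. The second step is to invoke a parabolic comparison principle to conclude $w \geq 0$, i.e., $\eta(x,t) \geq v(t)$ for all $x \in \mathbf{T}$ and all $t$ before rupture. The third step is purely algebraic: the inequality $v(t) > \eta_c$ is equivalent, using $\inf\eta_0 > \eta_c$ and $A\geq 0$, to
\[
	t < \frac{1}{\alpha} \log \left[ \left( \frac{A}{\alpha} + \inf \eta_0 \right) \bigg/ \left( \frac{A}{\alpha} + \eta_c \right) \right] = t_*,
\]
so that $\inf_x \eta(\cdot,t) \geq v(t) > \eta_c$ for every such $t$. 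By the definition of the rupture time, this forces $t_r \geq t_*$. The positivity $t_* > 0$ is immediate from $\inf\eta_0 > \eta_c$.

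The only real obstacle is justifying the comparison principle despite the singular source $\sum c_k \delta(x-a_k)$, which precludes a direct pointwise application of the classical maximum principle. The cleanest way around this is to mollify: replace each $\delta(x-a_k)$ by a nonnegative, smooth, periodic approximation $\delta_\varepsilon(x - a_k)$, so that $f_\varepsilon \to f$ in $H^{-1}(\mathbf{T})$ and $f_\varepsilon + A \geq 0$. For the resulting smooth linear parabolic problem, the standard weak maximum principle yields $w_\varepsilon \geq 0$; the conclusion $w \geq 0$ then follows by passing to the limit, using Duhamel's representation against the (strictly positive) periodic heat kernel, which gives continuous dependence of $\eta$ on the source in, say, $C(\mathbf{T} \times [0,T])$ away from the singular points. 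Alternatively, Duhamel's formula applied directly to \eqref{Eeta} shows $\eta(x,t) - e^{-\alpha t}\eta_0(x) = \int_0^t \int_\mathbf{T} G_{t-s}(x,y)(f(y) + \alpha \cdot 0)\,dy\,ds$ type identity, from which the inequality $\eta \geq v$ can be read off directly using positivity of the heat kernel; either route removes the obstruction.
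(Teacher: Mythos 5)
Your argument is the same as the paper's: construct the spatially constant subsolution solving the ODE obtained by dropping the nonnegative Dirac terms, compare from below, and compute the hitting time; the mollification step you add to justify comparison despite the singular source is a welcome refinement of the paper's bare invocation of the comparison principle (and your ODE $v'=-\alpha v - A$ corrects a sign typo in the paper's displayed ODE $\xi_t-\alpha\xi+A=0$, whose stated explicit solution nevertheless matches yours). Note also that your derivation correctly yields $\tfrac{A}{\alpha}$ in the numerator of $t_*$, consistent with the explicit formula at the end of the paper's own proof and exposing the $\tfrac{A}{\sigma}$ in the displayed statement of the proposition as a typo.
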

\begin{proof}
Let $\xi$ be a solution of an ODE
\[
	\xi_t - \alpha\xi + A = 0.
\]
Since $c_k\geq0$ for all $k$,
this $\xi$ is a subsolution of \eqref{Eeta}.
 We consider $\xi$ with initial data $\xi(0)=\inf\eta_0$.
 By the comparison principle, we observe that
\[
	\eta(x,t) \geq \xi (t).
\]
Thus, we have
\[
	t_r(\eta_0) \geq t_* := \sup \left\{ t > 0 \biggm|
	\inf_{(0,t)} \xi(\tau) > \eta_c \right\}.
\]
Since
\[
	\xi(t) = A \left( \frac{e^{-\alpha t}-1}{\alpha} \right)
	+ ( \inf \eta_0) e^{-\alpha t},
\]
$t_*$ has an explicit form
\[
	t_* = \frac1\alpha \log \left[ \left(\frac{A}{\alpha} + \inf\eta_0\right) \biggm/
	\left(\frac{A}{\alpha} + \eta_c \right) \right].
\]
\end{proof}

\section{Existence of a periodic solution} \label{S3} 

We consider the evolution of $\eta$ by \eqref{Eeta} with rupture.
 Here is a precise form of the dynamics.
 We consider $\eta$ of \eqref{Eeta} with initial data $\eta_0>\eta_c$, where $\eta_c>0$ is a given threshold value.
 At the first rupture time $t_1$, let $R=R(t_1)$ be the set such that
\[
	R(t_1) = \left\{ x \in \mathbf{T} \bigm|
	\eta(x,t_1) = \eta_c \right\}.
\]
We call this set the \emph{rupture set}.
 Let $\eta_a$ be a given number satisfying $\eta_a>\eta_c$.
 We set
\[
	\eta(x, t_1+0) = \eta_a
\]
for $x\in[a_k,a_{k+1})$ if $[a_k,a_{k+1})\cap R$ is not empty.
 For other $x$, we set
\[
	\eta(x, t_1+0) = \eta(x, t_1-0).
\]
For $t>t_1$, let $\eta$ be the solution of \eqref{Eeta} such that the value at $t=t_1$ equals $\eta(x,t_1+0)$ defined above.
 Let $t_r$ be the rupture time with initial data $\eta(x,t_1+0)$.
 We set $t_2=t_1+t_r$, which is the second rupture time.
 We repeat the same modification of $\eta$ at $t=t_2$ and proceed further until the third rupture time.
 We modify $\eta$ again at that time.
 We say that the resulting $\eta$ is a \emph{solution of \eqref{Eeta} with rupture} (fixing $\eta_a$ and $\eta_c$ with $\eta_a>\eta_c$).

We are interested in the behavior of $\eta$.
 Proposition \ref{Prup1} says that there is infinite many rupture times.
 However, it is not clear whether or not the set of 
 rupture times
 is discrete.
 To simplify the situation, we give a sufficient condition that the rupture set $R$ is contained in a single fixed interval $(a_i, a_{i+1})$.
\begin{enumerate}
\item[(S)] 
Let $s$ be a stationary solution
of \eqref{Eeta}.
 Assume that $\eta_c$ satisfies $s>\eta_c$ outside single interval $(a_i, a_{i+1})$.
 See Figure \ref{Fst} with $k=i$.
 Moreover, we assume that $\eta_a(>\eta_c)$ satisfies $s<\eta_a$ on $[a_i, a_{i+1}]$.
\end{enumerate}
%
For later convenience, we collect assumptions so that Proposition \ref{Prup1} and \ref{Prup2} apply.
 We shall assume
\begin{enumerate}
\item[(C)] the constant $c_k$ is nonnegative for all $k=1,2,\ldots,K$ and $A\geq\sum_{k=1}^K c_k/\omega$ so that $\int_0^\omega f\,dx\leq0$, where $f=\sum_{k=1}^K c_k \delta(x-a_k)-A$.
\end{enumerate}
 As the next lemma indicates, 
 these assumptions guarantee
 that the bubble collapse occurs only on $(a_i, a_{i+1})$.
\begin{lemma} \label{Lsing}
Assume (C) and (S).
 Let $\eta$ be the solution of \eqref{Eeta} with rupture where initial data $\eta_0>\max(s,\eta_c)$.
 Then the times $\{t_j\}$ when $\eta$ ruptures are ordered as
\[
	0 < t_1 < t_2 < \cdots < t_j < \cdots
\]
and $\lim_{j\to\infty}t_j=\infty$.
 Moreover, the rupture set $R$ at each rupture time is included in $(a_i,a_{i+1})$.
\end{lemma}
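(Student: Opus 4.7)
My strategy is to work with the deviation $\bar{\eta} = \eta - s$ from the stationary solution. The distributional equation for $s$ implies that its jump $s_x(a_k+0) - s_x(a_k-0) = -c_k/\sigma$ exactly cancels the $\delta(x-a_k)$ contribution in $f$, so between rupture times $\bar{\eta}$ lies in $C^1(\mathbf{T})$ and solves the homogeneous equation
\[
\bar{\eta}_t = \sigma \bar{\eta}_{xx} - \alpha \bar{\eta} \quad\text{on}\quad \mathbf{T}.
\]
Setting $u = e^{\alpha t}\bar{\eta}$ reduces this to $u_t = \sigma u_{xx}$ on the torus, where the strict positivity of the periodic heat kernel and the parabolic strong maximum principle are available.

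\textbf{Step 1 (localization of the first rupture).} By (S) and compactness, fix $\varepsilon>0$ with $s \geq \eta_c + \varepsilon$ on $\mathbf{T}\setminus(a_i,a_{i+1})$. Since $\eta_0>\max(s,\eta_c)$, the initial datum satisfies $\bar{\eta}_0 = \eta_0-s>0$, so the heat-kernel representation gives $\bar{\eta}(x,t)>0$ strictly on $\mathbf{T}\times(0,t_1]$. At any $x_0 \in R(t_1)$ one then has $\eta_c = \eta(x_0,t_1) > s(x_0)$, so $s(x_0)<\eta_c$ and the contrapositive of (S) forces $x_0 \in (a_i,a_{i+1})$. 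Hence $R(t_1) \subset (a_i,a_{i+1})$.

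\textbf{Step 2 (induction and uniform gap).} I would carry the inductive hypotheses $R(t_j)\subset(a_i,a_{i+1})$ and $\bar{\eta}(\cdot,t_j-0)\geq 0$ forward. The modification \eqref{Eth1} changes $\eta$ only on $[a_i,a_{i+1})$, where it is set to $\eta_a$; by (S), $\eta_a>s$ on $[a_i,a_{i+1}]$, so $\bar{\eta}(\cdot,t_j+0)\geq 0$ and is not identically zero. Repeating Step 1's heat-kernel argument on $(t_j,t_{j+1}]$ gives $\bar{\eta}(\cdot,t_{j+1})>0$ strictly, hence $R(t_{j+1})\subset(a_i,a_{i+1})$. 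Moreover, combining (S) with the modification,
\[
\inf_{x\in\mathbf{T}}\eta(x,t_j+0) \geq m := \min(\eta_a,\eta_c+\varepsilon) > \eta_c,
\]
since values on $[a_i,a_{i+1})$ equal $\eta_a$ while values on the complement exceed $s\geq\eta_c+\varepsilon$. Applying Proposition \ref{Prup2} to the problem restarted at $t_j$ with initial datum $\eta(\cdot,t_j+0)$ produces the $j$-independent lower bound
\[
t_{j+1}-t_j \geq \frac{1}{\alpha}\log\!\left[\frac{A/\alpha+m}{A/\alpha+\eta_c}\right] =: \tau_* > 0,
\]
so the $t_j$ are strictly increasing and $t_j\to\infty$; Proposition \ref{Prup1} applied on each restart ensures $t_{j+1}<\infty$, closing the induction.

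\textbf{Anticipated difficulty.} The technical point that needs care is the regularity of $\bar{\eta}$: the rupture rule \eqref{Eth1} introduces a jump at $x=a_{i+1}$, so each restart has merely bounded (discontinuous) initial data. This forces one to interpret $\eta$ as a mild/distributional solution of \eqref{Eeta}, and then to verify that parabolic smoothing is instantaneous, that the strict positivity of the periodic heat kernel still applies to nonnegative bounded initial data, and that Propositions \ref{Prup1}--\ref{Prup2} remain valid in this regularity class. Once these points are pinned down, the argument above is the main content.
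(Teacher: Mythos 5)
Your proposal is correct and follows essentially the same route as the paper's proof: establish $\eta > s$ between ruptures (the paper invokes the comparison principle; you make this explicit via the decomposition $\bar\eta = \eta - s$, which the paper already sets up in Section~\ref{S2}, and heat-kernel positivity), use (S) to confine the rupture set to $(a_i,a_{i+1})$, and obtain a uniform positive lower bound for $t_{j+1}-t_j$ from Proposition~\ref{Prup2} by bounding $\inf\eta(\cdot,t_j+0)-\eta_c$ below by a $j$-independent constant. Your choice $m=\min(\eta_a,\eta_c+\varepsilon)$ is in fact the careful way to state the uniform bound fed into Proposition~\ref{Prup2}, and your anticipated regularity concern, while legitimate, is handled by the mild-solution formula \eqref{Esol} and the maximum principle, both of which tolerate bounded discontinuous restart data.
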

\begin{proof}
By a comparison principle, $\eta>s$ for $t\in(0,t_1)$.
 By (S), we have $\eta>\eta_c$ outside $[a_i,a_{i+1}]$.
 Thus $R(t_1)$ is included in $(a_i,a_{i+1})$.
 At $t=t_1$, $\eta(x,t_1+0)>s(x)$ for $x\in\mathbf{T}$ because of the assumption on $\eta_a$ in (S).
 Moreover, 
\[
	\inf_x \eta(x,t_1+0) - \eta_c
	> \inf_{x\not\in(a_i,a_{i+1})} s - \eta_c =: \rho > 0
\]
by (S).
 Let $t_2$ denote the second rupture time.
 By Proposition \ref{Prup2},
\[
	t_2 - t_1 \geq t_* > 0,
\]
where $t_*$ is a constant depending only on $\sigma$, $\eta_c$, $\alpha$ and $A$.
 The same procedure implies that $t_j-t_{j+1}\geq t_*>0$.
 The existence of $t_1<t_2<\cdots$ is guaranteed by Proposition \ref{Prup1}.
 As in the first step, at each step the rupture set is always contained in $(a_i, a_{i+1})$.
 The proof is now complete.
\end{proof}
We are now in position to state our main result on existence of a periodic solution.
\begin{thm} \label{Tmain}
Assume (C) and (S).
 There exists a solution $\eta$ of \eqref{Eeta} with rupture and $T>0$ such that
\[
	\eta(x,t+T) = \eta(x,t) \quad\text{for all}\quad
	t \in \mathbf{R},\ x \in \mathbf{T}.
\]
\end{thm}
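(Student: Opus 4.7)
The plan is to apply Schauder's fixed point theorem (\cite[Theorem 11.1]{GT}) to a Poincar\'e map $\Phi$ on the arc $I := [a_{i+1}, a_i+\omega]$ complementary to the rupture interval, as hinted at the end of Section \ref{S1S2}. Take $\mathcal{X} = \{\phi \in C(I) : s|_I \le \phi \le s|_I + M\}$, where $s$ is the stationary solution from (S) and $M$ is a constant to be fixed; $\mathcal{X}$ is convex, closed and bounded in $C(I)$. Given $\phi \in \mathcal{X}$, extend it to $\mathbf{T}$ by $\phi \equiv \eta_a$ on $[a_i, a_{i+1}]$, let $\eta(\cdot,\cdot;\phi)$ solve \eqref{Eeta} with this initial datum, and let $t_r(\phi) \ge t_* > 0$ be its first rupture time (finite by Proposition \ref{Prup1} and bounded below by Proposition \ref{Prup2}). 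Define $\Phi(\phi) := \eta(\cdot, t_r(\phi);\phi)\big|_I$; since $t_r(\phi) > 0$, heat-kernel smoothing makes $\eta(\cdot, t_r(\phi);\phi)$ continuous on $\mathbf{T}$, so $\Phi(\phi) \in C(I)$. By Lemma \ref{Lsing} and (S) the rupture set is contained in $(a_i, a_{i+1})$, making $\Phi$ compatible with the rule \eqref{Eth1}, and any fixed point $\phi^*$ of $\Phi$ produces a periodic solution of period $T = t_r(\phi^*)$.

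Invariance of $\Phi$ comes from uniform two-sided bounds. The lower bound $\eta(\cdot,t;\phi) \ge s$ is immediate from the comparison principle since $\phi \ge s$ and $s$ is a stationary solution of \eqref{Eeta}. For the upper bound, observe that $w := \eta - s$ solves the decoupled linear equation $w_t = \sigma w_{xx} - \alpha w$ on $\mathbf{T}$ (the singular source $f$ cancels in the subtraction), so $\|w(\cdot,t)\|_\infty \le e^{-\alpha t}\|w(\cdot,0)\|_\infty$. Combining this exponential decay on each inter-rupture interval (of length at least $t_*$) with the bounded reset $w(\cdot, t_j+0) \le \eta_a - \min_{x\in[a_i,a_{i+1}]} s(x)$ on $[a_i,a_{i+1}]$ at each rupture yields, by iteration, an explicit uniform $M$.

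Compactness of $\Phi(\mathcal{X})$ follows from the regularizing effect of the equation, made quantitative by Duhamel's formula: $\eta(\cdot,t;\phi)$ equals the decaying heat semigroup applied to $\phi$ --- smooth for $t \ge t_*$ with bounds depending only on $\|\phi\|_\infty$ --- plus a $\phi$-independent convolution of the fixed source $f$ with the heat kernel, which is a continuous function with slope jumps only at the points $a_k$. Hence $\Phi(\mathcal{X})$ is uniformly Lipschitz on $I$, and the Arzel\`a--Ascoli theorem gives relative compactness in $C(I)$. Continuity of $\phi \mapsto \eta(\cdot,t;\phi)$ for fixed $t$ is standard linear parabolic theory.

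The main obstacle is continuity of $\phi \mapsto t_r(\phi)$. For $\phi_n \to \phi$ in $C(I)$ the quantities $m_n(t) := \min_x \eta(x,t;\phi_n)$ converge uniformly on compact time intervals to $m(t)$; since $t_r$ is defined as $\sup\{t : m(t) > \eta_c\}$, continuity of $t_r$ hinges on $m$ crossing $\eta_c$ transversally rather than tangentially at $t_r(\phi)$. I would establish this by a parabolic Hopf-type argument at the rupture point $x^* \in (a_i, a_{i+1})$, which by (S) is bounded away from the singular support of $f$: for $v := \eta - \eta_c$ the equation reads $v_t - \sigma v_{xx} + \alpha v = -(\alpha \eta_c + A) < 0$ near $x^*$, and the strict negativity of the right-hand side rules out a degenerate ``touch and return'' along a space-time curve, forcing $\partial_t \eta(x^*, t_r) < 0$. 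Granting this transversality, one has $t_r(\phi_n) \to t_r(\phi)$, so $\Phi$ is continuous and Schauder's theorem applied to the compact continuous self-map of the convex closed bounded set $\mathcal{X}$ produces the fixed point $\phi^*$ and the periodic solution.
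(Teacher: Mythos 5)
Your proposal follows essentially the same route as the paper: Schauder's fixed point theorem applied to the ``one rupture to the next'' map on the set $\{\phi : s \le \phi \le s + M\}$ in $C$ of the complementary arc, with invariance by comparison, a quantitative lower bound $t_r \ge t_* > 0$ from Proposition~\ref{Prup2}, an upper bound from Proposition~\ref{Prup1}, and compactness from the regularizing effect of the heat semigroup plus Arzel\`a--Ascoli. The cosmetic differences are harmless: for the upper invariance bound the paper simply observes that $s+B$ is a supersolution (since $\alpha B\ge 0$), which gives $S(t)\eta_0\le s+B$ in one step and makes your iteration-over-rupture-intervals unnecessary; for compactness, the paper proves a quantitative Lipschitz bound (Lemma~\ref{Lreg}) rather than merely asserting that the Duhamel source term is Lipschitz with slope jumps at $a_k$, but this is the same idea.

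The one place I would push back is the transversality argument for continuity of $\phi\mapsto t_r(\phi)$. You correctly identify that the $L^\infty$ contraction alone only gives uniform convergence of $m_n(t)=\min_x\eta(x,t;\phi_n)$ to $m(t)$, and that continuity of $t_r$ needs $m$ to cross $\eta_c$ non-tangentially; the paper is in fact quite terse here and just asserts the implication. However, the justification you offer --- ``the strict negativity of the right-hand side rules out a degenerate touch-and-return, forcing $\partial_t\eta(x^*,t_r)<0$'' --- does not follow. At the touching point $(x^*,t_r)$ the equation gives $\partial_t\eta(x^*,t_r)=\sigma\,\eta_{xx}(x^*,t_r)-\alpha\eta_c-A$, and since $x^*$ is a spatial minimizer one only knows $\eta_{xx}(x^*,t_r)\ge 0$; this second spatial derivative can be as large as $(\alpha\eta_c+A)/\sigma$, in which case $\partial_t\eta(x^*,t_r)=0$. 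The strict negativity of the source $-(\alpha\eta_c+A)$ therefore does not by itself produce a strictly negative time derivative, and the usual strong maximum principle / Hopf lemma is not directly applicable because $v=\eta-\eta_c$ is not a supersolution of the homogeneous operator near its zero; the auxiliary substitutions that homogenize the equation shift $v$ by a positive function of $t$, which destroys the touching. So this step, as written, is a genuine gap, and you would need a different argument (or a more careful second-order analysis at the touching point) to complete the continuity of $t_r$.
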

We shall prove Theorem \ref{Tmain} by applying Schauder's fixed point theorem (cf.\ \cite[Theorem 11.1]{GT}).
 We set
\begin{equation} \label{Eset}
	E := \left\{ \xi \in C\left(\mathbf{T} \backslash (a_i, a_{i+1}) \right) \bigm|
	s \leq \xi \leq s+B\ \text{on}\ \mathbf{T} \backslash (a_i, a_{i+1}) \right\},
\end{equation}
where $B$ is taken so that $\inf_{x\in\mathbf{T}}(s+B)>\eta_a$. 
 For $\xi\in E$, we set
\begin{equation}
		\eta_0 = \left \{
		\begin{array}{l} \label{Eint}
			\eta_a, \quad x \in(a_i, a_{i+1}), \\
			\xi(x), \quad x \not\in(a_i, a_{i+1}).
		\end{array}
		\right.
\end{equation}
We solve \eqref{Eeta} with initial data $\eta_0$.
 Let $\eta$ be the solution of \eqref{Eeta} with initial data $\eta_0$.
 Let $t_r(\xi)$ denote its rupture time.
 By Lemma \ref{Lsing}, the rupture set is contained in $(a_i,a_{i+1})$.
 We define a nonlinear map $\mathcal{T}$ by
\begin{equation} \label{Emap}
	(\mathcal{T}\xi) (x) := \eta \left(x,t_r(\xi)-0 \right)
	\quad\text{for}\quad x \in \mathbf{T} \backslash (a_i, a_{i+1}).
\end{equation}
A fixed point of $\mathcal{T}$ gives a time-periodic solution of \eqref{Eeta} with rupture.
 In the rest of this section, we shall prove the existence of  a fixed point by applying Schauder's fixed point theorem.

To say compactness of the mapping $\mathcal{T}$, we prepare a regularity lemma for the heat equation with irregular inhomogeneous term.
 We begin with a simple estimate of the Gauss kernel $G_t(x)=(4\pi t)^{-n/2}\exp\left(-|x|^2/4t\right)$ for $x=(x_1,\ldots,x_n)\in\mathbf{R}^n$.
 We begin with a well-known estimate.
\begin{lemma} \label{Lgau}
Let $\Gamma(z)$ denote the Gamma function, i.e.,
\[
	\Gamma(z) = \int_0^\infty e^{-r} r^{z-1}\, dr.
\]
 Then
\[
	\sup_{t>0} \sup_{x\in\mathbf{R}^n}
	\int_0^t \left|\nabla G_\tau(x)\right|d\tau\cdot |x|^{n-1}
	= \Gamma \left(\frac{n}{2}\right) \pi^{-\frac{n}{2}}.
\]
\end{lemma}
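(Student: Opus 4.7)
The plan is a direct computation: since $G_\tau$ is spherically symmetric in $x$, everything in sight depends only on $r := |x|$ and $\tau$, so the integral $\int_0^t |\nabla G_\tau(x)|\, d\tau$ should reduce, via one natural substitution, to an incomplete Gamma integral whose supremum over $t$ is the full $\Gamma(n/2)$.

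First I would compute $\nabla G_\tau(x) = -\frac{x}{2\tau} G_\tau(x)$, so that
\[
|\nabla G_\tau(x)| = \frac{r}{2\tau}(4\pi\tau)^{-n/2} e^{-r^2/(4\tau)}.
\]
To separate the $r$-dependence, I would introduce the scaling $u = r^2/(4\tau)$, so that $\tau = r^2/(4u)$ and $d\tau = -r^2/(4u^2)\,du$; this sends $\tau=0$ to $u=\infty$ and $\tau = t$ to $\rho := r^2/(4t)$. A short accounting of the factors of $r$, $\pi$, and $4$ then yields an identity of the form
\[
|x|^{n-1} \int_0^t |\nabla G_\tau(x)|\, d\tau = C_n \int_{\rho}^\infty u^{n/2-1} e^{-u}\,du,
\]
and the key structural point is that the entire $(t,x)$-dependence is now absorbed into the single parameter $\rho$.

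To conclude, I would note that the right-hand side is monotone increasing in $t$ (and decreasing in $|x|$), and that $\rho \downarrow 0$ is achievable by letting $t \to \infty$ for any fixed $x \neq 0$ (equivalently $|x| \to 0$ for fixed $t > 0$). Hence
\[
\sup_{t>0}\ \sup_{x \in \mathbf{R}^n}\ |x|^{n-1} \int_0^t |\nabla G_\tau(x)|\, d\tau
= C_n \lim_{\rho \downarrow 0} \int_{\rho}^\infty u^{n/2-1} e^{-u}\, du
= C_n \Gamma(n/2),
\]
and a careful accounting of the prefactors identifies $C_n$ with $\pi^{-n/2}$ as asserted. The only obstacle worth flagging is the bookkeeping: one must verify that the scaling substitution cancels $|x|^{n-1}$ cleanly against the factors coming from $(4\pi\tau)^{-n/2}$ and $r/(2\tau)$, which it does, leaving a one-variable Gamma-function exercise.
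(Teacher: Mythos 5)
Your route is the paper's: substitute $u = |x|^2/(4\tau)$ to reduce the time integral to an incomplete Gamma integral, then observe that the supremum is the limit as the lower endpoint of integration tends to zero. You actually do slightly better than the paper at the last step, since the paper's proof only records the upper bound ``estimated from above by $\pi^{-n/2}\Gamma(n/2)|x|^{1-n}$'' and leaves the attainment of the supremum implicit, whereas you spell out the monotonicity in $t$ that makes the supremum an honest limit.

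The gap is in precisely the step you call ``bookkeeping'' and then assert without carrying out. With your (correct) Jacobian $d\tau = -\frac{r^2}{4u^2}\,du$ the cancellation does not give $C_n = \pi^{-n/2}$. Carrying the factors through,
\[
\frac{r}{2\tau}\,(4\pi\tau)^{-n/2}\,e^{-r^2/(4\tau)}\,|d\tau|
= \frac{2u}{r}\cdot \frac{u^{n/2}}{\pi^{n/2}r^{n}}\cdot e^{-u}\cdot \frac{r^2}{4u^2}\,du
= \frac{1}{2}\,\pi^{-n/2}\,r^{1-n}\,u^{n/2-1}e^{-u}\,du,
\]
so that
\[
|x|^{n-1}\int_0^t\left|\nabla G_\tau(x)\right|d\tau
= \frac{1}{2}\,\pi^{-n/2}\int_{|x|^2/(4t)}^{\infty}u^{n/2-1}e^{-u}\,du,
\]
i.e.\ $C_n=\tfrac12\pi^{-n/2}$ and the supremum is $\tfrac12\,\Gamma(n/2)\,\pi^{-n/2}$, half of what you (and the Lemma) claim. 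A quick sanity check at $n=1$: the supremum is $\tfrac12$, not $1$. The paper's proof lands on the stated constant only because it records the Jacobian as $d\tau = -(|x|^2/(2r^2))\,dr$ rather than the correct $-(|x|^2/(4r^2))\,dr$; the Lemma statement and the paper's computation share the same factor-of-two slip, which is harmless where the Lemma is invoked (only an upper bound on $I_1$ is needed in the proof of Lemma~\ref{Lreg}). But the ``careful accounting'' you defer does not cancel as you assert, and the equality you state is off by a factor of two.
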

\begin{proof}
We give here a proof for completeness.
 Since
\[
	\frac{\partial}{\partial x_i} G_\tau(x)
	= - \frac{x_i}{2\tau} G_\tau(x),
\]
we proceed
\[
	\int_0^t \left|\nabla G_\tau(x)\right|d\tau
	\leq \int_0^t \frac{|x|}{2\tau} \frac{1}{(4\pi\tau)^\frac{n}{2}} \exp \left(- \frac{|x|^2}{4\tau}\right) d\tau.
\]
By charging the variable $\tau$ of integration by $\tau=|x|^2/4r$, the right-hand side equals
\[
	\pi^{-\frac{n}{2}} \int_{|x|^2/4r}^\infty e^{-r} r^{1+\frac{n}{2} -2}\, dr |x|^{-1-n+2}
\]
since $d\tau=-\left(|x|^2/2r^2\right)dr$.
 This quantity is estimated from above by $\pi^{-n/2}\Gamma(n/2)|x|^{1-n}$.
\end{proof}

For $\eta_0\in C(\mathbf{T})$, let $\eta$ be the solution of \eqref{Eeta} with initial data $\eta_0$.
 Let $S(t):\eta_0\longmapsto\eta(\cdot,t)$ denote the solution operator so that $\eta(x,t)=\left(S(t)\eta_0\right)(x)$.
\begin{lemma} \label{Lreg}
There is a numerical constant $C$ such that for each $T>0$ there is a constant $C_T$ depending only on $T$ and $\omega$ satisfying
\[
	\left\| \partial_x S(t) \eta_0 \right\|_\infty
	\leq \frac{C}{t^{1/2}} \|\eta_0\|_\infty
	+ C_T \sum_{k=1}^K |c_k|
\]
for all $t>0$, $\eta_0\in C(\mathbf{T})$, where $\|\cdot\|_\infty$ denotes the sup norm on $\mathbf{T}$.
\end{lemma}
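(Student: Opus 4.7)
The plan is to apply Duhamel's formula on the torus using the heat kernel $G_t^\sigma(x) = (4\pi\sigma t)^{-1/2}\exp(-x^2/(4\sigma t))$ on $\mathbf{R}$ and its periodization $G_t^{\mathbf{T}}(x) = \sum_{m\in\mathbf{Z}} G_t^\sigma(x+m\omega)$. Then $\eta = S(t)\eta_0$ admits the representation $\eta(\cdot,t) = e^{-\alpha t}\, G_t^{\mathbf{T}} *_{\mathbf{T}} \eta_0 + \int_0^t e^{-\alpha(t-\tau)} G_{t-\tau}^{\mathbf{T}} *_{\mathbf{T}} f\, d\tau$. Differentiating in $x$ and observing that the constant term $-A$ in $f$ produces a spatially constant convolution which vanishes under $\partial_x$, we obtain
\[
\partial_x \eta(x,t) = e^{-\alpha t} (\partial_x G_t^{\mathbf{T}} *_{\mathbf{T}} \eta_0)(x) + \sum_{k=1}^K c_k \int_0^t e^{-\alpha(t-\tau)} \partial_x G_{t-\tau}^{\mathbf{T}}(x-a_k)\, d\tau.
\]
These two terms will produce respectively the $C t^{-1/2}\|\eta_0\|_\infty$ and $C_T \sum_k |c_k|$ contributions in the estimate.

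For the initial-data term, Young's inequality yields $|e^{-\alpha t}(\partial_x G_t^{\mathbf{T}} *_{\mathbf{T}} \eta_0)(x)| \leq \|\eta_0\|_\infty \|\partial_x G_t^{\mathbf{T}}\|_{L^1(\mathbf{T})}$, and the triangle inequality applied to the periodizing sum together with a direct Gaussian integral gives $\|\partial_x G_t^{\mathbf{T}}\|_{L^1(\mathbf{T})} \leq \|\partial_x G_t^\sigma\|_{L^1(\mathbf{R})} = (\pi\sigma t)^{-1/2}$. For the forcing term one needs a pointwise bound on $\int_0^t |\partial_x G_{t-\tau}^{\mathbf{T}}(y)|\, d\tau$. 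Periodizing term by term and using the substitution $r = z^2/(4\sigma s)$ (as in the proof of Lemma \ref{Lgau} with $n=1$) gives, for each $z = y+m\omega \neq 0$,
\[
\int_0^t |\partial_x G_s^\sigma(z)|\, ds = \frac{1}{2\sigma\sqrt\pi}\int_{z^2/(4\sigma t)}^\infty r^{-1/2} e^{-r}\, dr.
\]

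Taking $y$ in $[0,\omega)$, one has $|y+m\omega| \geq (|m|-1)\omega$, so the $m=0$ term is bounded by the universal constant $1/(2\sigma)$, while for $|m|\geq 1$ the lower limit produces a Gaussian tail $\exp(-c\,m^2\omega^2/(4\sigma T))$, making the sum over $m\in\mathbf{Z}$ convergent uniformly in $y$ and in $t\in(0,T]$. This yields a constant $C_T$ depending only on $T$ and $\omega$ (with the fixed parameters $\sigma,\alpha$ absorbed into it), completing the estimate. The principal technical obstacle is precisely this summability: in dimension one Lemma \ref{Lgau} provides only an $x$-uniform bound per periodized summand (the decay factor $|x|^{n-1}$ reduces to $1$), so a naive term-by-term application would diverge; one must instead use the explicit tail form displayed above to exploit the Gaussian decay in $|m|\omega$ directly.
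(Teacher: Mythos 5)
Your approach is essentially the same as the paper's: Duhamel's formula, the standard $L^\infty$--$L^\infty$ derivative estimate for the semigroup piece, and then a separation of the periodized Dirac contributions into a nearby singular term (handled by the uniform bound of Lemma~\ref{Lgau}) and far-away terms summed via Gaussian decay. The one genuine technical variation is that you bound the far-away terms through the exact incomplete-Gamma tail form extracted from the substitution in Lemma~\ref{Lgau}'s proof, whereas the paper instead uses the pointwise comparison $|\partial_x G_t| \leq C_0 t^{-1/2} G_{t/2}$ and then sums the Gaussians; both are workable and equally elementary. Your remark about why $n=1$ forces one to exploit the $m$-decay rather than a term-by-term uniform bound is a correct and useful observation that the paper leaves implicit.

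There is, however, a small but real gap in your summation step. You take $y\in[0,\omega)$ and claim that $|y+m\omega|\geq(|m|-1)\omega$ so that \emph{every} $|m|\geq1$ contributes a decaying Gaussian tail $\exp\bigl(-c\,m^2\omega^2/(4\sigma T)\bigr)$. For $m=-1$ this fails: $|y-\omega|$ can be arbitrarily small as $y\uparrow\omega$, the lower limit of the tail integral tends to $0$, and the corresponding term is only bounded by the same universal constant $1/(2\sigma)$ as the $m=0$ term, not by a decaying tail. Moreover the exponent you quote should really be $(|m|-1)^2$, not $m^2$, for negative $m$. The fix is exactly what the paper does: choose the fundamental domain symmetrically about the singularity (the paper scales to $\omega=2$ and takes $x\in[-1,1]$), so that $m=0$ is the unique nearest image and every $m\neq0$ satisfies $|x-m\omega|\geq(2|m|-1)\omega/2\geq\omega/2$. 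Equivalently, you could keep $y\in[0,\omega)$ but separate off \emph{both} $m=0$ and $m=-1$ with the universal bound $1/(2\sigma)$, and only sum Gaussian tails over $m\notin\{0,-1\}$. With either repair the argument closes.
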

\begin{proof}
We note that
\begin{equation} \label{Esol}
	S(t) \eta_0 = e^{tL} \eta_0
	+ \int_0^t e^{(t-\tau)L} f\, d\tau, \quad
	f = \sum_{k=1}^K c_k \delta(x-a_k) - A,
\end{equation} 
where $L=\Delta-\alpha$ so that
\[
	(e^{tL} \eta_0) (x)
	= (G_t^\alpha * \eta_0) (x)
	= \int_\mathbf{R} G_t^\alpha(x-y) \eta_0(x)\, dx, \quad
	G_t^\alpha := e^{-\alpha t} G_t.
\]
Here $\eta_0$ is regarded as a periodic function on $\mathbf{R}$ with period $\omega$.
 The estimate
\[
	C := \sup_{t>0} t^{1/2} \| \partial_x e^{tL} \eta_0 \|_\infty \bigm/ \| \eta_0 \|_\infty < \infty
\]
is standard and known as an $L^\infty$-$L^\infty$ estimate for derivative.
 It suffices to estimate the term $\partial_x\int_0^t e^{(t-s)L}f\,d\tau$.
 Since $\partial_x(e^{tL}A)=0$ and $|e^{-\alpha t}|\leq1$, it suffices to estimate
\[
	I(x) = \sum_{j=-\infty}^\infty \partial_x 
	\int_0^t \int_\mathbf{R} G_{t-\tau} (x-y) \delta(y-j\omega-a_k)\, dyd\tau
\]
for each $k=1,\ldots,K$.
 We may assume that $a_k=0$.
 We may also assume that $\omega=2$ by scaling.
 It suffices to estimate
\[
	I(x) = \sum_{j=-\infty}^\infty 
	\int_0^t \partial_x G_{t-\tau} (x-j) \, d\tau
\]
for $x\in[-1,1]$.
 We proceed
\[
	I(x) = \partial_x 	\int_0^t G_{t-\tau} (x)\, d\tau
	+ \sum_{j\neq0} \int_0^t (\partial_x G_{t-\tau}) (x-2j)\, d\tau
	= I_1 + I_2.
\]
The first term $I_1$, which is the leading term is estimated as
\[
	\left| I_1(x) \right| \leq 1 (= \pi^{-1/2} \Gamma(1/2))
\]
by Lemma \ref{Lgau}.
 We shall estimate $I_2(x)$ for $|x|\leq1$.
 Since $xe^{-x^2}\leq(2e)^{-1/2}$ for $x>0$ and $\partial_x G_t=-(x/2t)G_t$, we see
\[
	| \partial_x G_t | \leq \frac{C_0}{t^{1/2}} G_{t/2}
\]
with some constant $C_0$ independent of $t$ and $x$.
 Thus,
\begin{align*}
	\sup_{|x|\leq1} \left| I_2(x) \right|
	& \leq \sum_{j\neq0} \int_0^t \frac{1}{(t-\tau)^{1/2}} G_{(t-\tau)/2} (x-2j)\, d\tau \\
	& \leq 2 \sum_{j=1}^\infty \int_0^t \frac{1}{r^{1/2}} G_{r/2} (2j-1)\, d\tau.
\end{align*}
Since $(2j-1)^2\geq j$ for $j=1,2,\ldots$, we see
\[
	\sum_{j=1}^\infty G_{r/2} (2j-1)
	\leq \sum_{j=1}^\infty G_{r/2} (1) e^{-j/4r}
	= G_{r/2} (1) \frac{e^{-1/(4r)}}{1-e^{-(1/4r)}}.
\]
Thus
\[
	\sup_{|x|\leq1} \left| I_2(x) \right|
	\leq \int_0^t \frac{1}{r^{1/2}} G_{r/2} (1)\, dr \frac{1}{1-e^{-(1/4T)}}.
\]
The integrand $r^{-1/2}G_{r/2}(1)$ is integrable on $(0,T)$ so we have a bound of $\sup_{|x|\leq1} \left| I_2(x) \right|$ for $t\leq T$.
 We now conclude that $\sup_{0\leq t\leq T} \sup_{|x|\leq1} I(x):=C_T$ is finite.
 The proof is now complete.
\end{proof}
\begin{proof}[Proof of Theorem \ref{Tmain}]
We consider the mapping $\mathcal{T}$ defined by \eqref{Emap} on the set $E$ defined by \eqref{Eset}.
 To have a fixed point by Schauder's theorem, it suffices to prove that
\begin{enumerate}
\item[(i)] $E$ is a convex, closed set in a Banach space $C\left(\mathbf{T}\backslash(a_i,a_{i+1})\right)$;
\item[(i\hspace{-1pt}i)] the mapping $\mathcal{T}$ is continuous from $E$ into $E$;
\item[(i\hspace{-1pt}i\hspace{-1pt}i)] its image $\mathcal{T}(E)$ is relatively compact.
\end{enumerate}
The convexity of $E$ is clear by definition.
 The closedness of $E$ under the sup-norm in $C\left(\mathbf{T}\backslash(a_i,a_{i+1})\right)$ is also clear.
 By the solution formula \eqref{Esol}, it is not difficult to see that $\eta\in C\left(\mathbf{T}\times(\delta,\infty)\right)$ for any $\delta>0$.
 Since $s+B$ is a supersolution of \eqref{Eeta}, we see $S(t)\eta_0\leq s+B$ for $\eta_0$ defined by \eqref{Eint} if we choose $B$ such that $\inf_{x\in\mathbf{T}}(s+B)>\eta_a$.
 Since $S(t)\eta_0\geq s$ for $\eta_0\geq s$, we now observe that $s\leq S(t)\eta_0\leq s+B$.
 By the maximum principle, we know
\[
	\left\| S(t)\eta_{01} - S(t)\eta_{02} \right\|_{L^\infty(\mathbf{T})}
	\leq \| \eta_{01}-\eta_{02} \|_{L^\infty(\mathbf{T})}
\]
which implies that $t_r(\xi)$ moves continuously in $\xi\in E$.
 Thus, the mapping $\mathcal{T}$ is continuous from $E$ into $E$ so we obtain (i) and (i\hspace{-0.1em}i).
 Up to this moment, we only use the assumption that $\int_0^\omega f\,dx=0$ for $f=\sum_{k=1}^K c_k-A$ so that the rupture time exists.

It remains to prove that $\mathcal{T}(E)$ is relatively compact.
 By our assumption (S), if we take
\begin{equation*}
		\eta_0 = \left \{
		\begin{array}{l}
			\eta_a, \quad x \in(a_i, a_{i+1}) \\
			\xi(x), \quad x \not\in(a_i, a_{i+1})
		\end{array}
		\right.
\end{equation*}
for $\xi\in E$, then $\inf\eta_0\geq\eta_c+\delta$ for some $\delta>0$ independent of $\xi\in E$.
 Since we assume (C), applying Proposition \ref{Prup2}, we observe that $t_r(\xi)\geq\delta'$ with some $\delta'>0$ independent of $\xi\in E$.
 This estimate for $t_r(\xi)$ from below is crucial in our proof.
 The estimate for $t_r(\xi)$ from above is obtained by Proposition \ref{Prup1}.
 The estimate in Proposition \ref{Prup1} implies that there is $T$ depending only on $E$ and the equation \eqref{Eeta} such that
\[
	t_r(\xi) \leq T
\]
for all $\xi\in E$.
 We now apply Lemma \ref{Lreg} to get
\[
	\left\| \partial_x \mathcal{T} (\xi) \right\|'_\infty
	\leq \frac{C\|\xi\|'_\infty}{(\delta')^{1/2}}
	+ C_T \sum_{k=1}^\infty c_k,
\]
where $\|\cdot\|'_\infty$ is the sup norm on $\mathbf{T}\backslash(a_i,a_{i+1})$.
 Since $\mathcal{T}(E)$ is bounded, we are able to apply the Arzel\`a-Ascoli theorem to conclude that $\mathcal{T}(E)$ is relatively compact in $C\left(\mathbf{T}\backslash(a_i,a_{i+1})\right)$.
 The proof is now complete.
\end{proof}
\noindent
\textbf{Generalization.}
 The proof of existence of a periodic-in-time solution can be easily generalized in more general setting.
 Let $\Omega$ be a metric measure space.
 We consider a family $\left\{S(t)\right\}_{t\geq0}$ is an order-preserving semigroup which is $*$-weakly continuous at $t=0$.
 In other words, we assume
\begin{enumerate}
\item[(S1)] (semigroup property)
 $S(t)S(\tau)\eta_0=S(t+\tau)\eta_0$. $t,\tau\geq0$, $\eta_0\in L^\infty(\Omega)$ and $S(0)$ is the identity operator;
\item[(S2)] (convergence at time zero)
 $S(t)\eta_0$ converges to $\eta_0$ as $t\downarrow0$ in the $*$-weak topology of $L^\infty(\Omega)$;
\item[(S3)] (order preserving)
 if $\eta_0\geq\eta_1$ in $L^\infty(\Omega)$, then $S(t)\eta_0\geq S(t)\eta_1$ for all $t\geq0$.
\end{enumerate}
For a stationary solution $s$, we assume
\begin{enumerate}
\item[(S4)] (unique existence of a stationary solution and its global stability)
 there is a unique $s\in C(\overline{\Omega})\cap L^\infty(\Omega)$ such that $S(t)s=s$ for all $t>0$.
 Moreover, $S(t)\eta_0\to s$ in $L^\infty(\Omega)$ as $t\to\infty$.
\end{enumerate}
Let $\eta_c>0$ be a given threshold value.
 For the stationary solution, we further assume
\begin{enumerate}
\item[(S5)] there is an open set $D\subset\Omega$ such that $\inf_{\Omega\backslash D}s>\eta_c$ and $\inf_D s<\eta_c$.
\end{enumerate}
This assumption implies that the rupture occurs only in $D$.
 We consider the evolution with rupture.
 Let $t_r(\eta_0)$ be the rupture time starting with $\eta_0$, i.e.,
\[
	t_r(\eta_0) := \sup \left\{ t \Bigm| \inf_\Omega S(t)\eta_0 > \eta_c \right\}.
\]
By (S4) and (S5), such $t_r(\eta_0)$ exists as a positive number.
 We set
\[
	S^r(t)\eta_0 := S(t)\eta_0, \quad
	0 < t < t_r(\eta_0)
\]
and
\begin{equation*}
		\left( S^r \left( t_r(\eta_0) \right)\eta_0 \right)(x) := \left \{
		\begin{array}{ll}
			\eta_a &\text{for}\quad x \in D \\
			\left( S \left( t_r(\eta_0) \right)\eta_0 \right)(x) &\text{for}\quad x \in \Omega\backslash D.
		\end{array}
		\right.
\end{equation*}
Here $\eta_a$ is a positive number such that $\eta_a>\sup_{\overline{D}}s$.
 To guarantee that $t_r$ is bounded from below, we assume that
\begin{enumerate}
\item[(S6)] there is $t_+$ depending only on $\inf \eta_0-\eta_c$ such that $t_r(\eta_0) \geq t_+ > 0$.
\end{enumerate}
We define $S^r(t)$ successively.
 We set $t_1=t_r(\eta_0)$ and set 
\[
	S^r(t)\eta_0 := S(t-t_1)\eta^{t_1}, \quad
	\eta^{t_1} = S^r(t_1)\eta \quad\text{for}\quad t \quad
	\text{satisfying}\quad t_1 \leq t < t_r(\eta^{t_1}) + t_1
\]
and
\begin{equation*}
		\left( S^r (t_2) \eta_0 \right)(x) := \left \{
		\begin{array}{l}
			\eta_a \quad\text{for}\quad x \in D \\
			S \left( t_r(\eta^{t_1}) \right)\eta^{t_1}
			\quad\text{for}\quad x \in \Omega\backslash D,\ 
			t_2 = t_1 + t_r(\eta^{t_1}).
		\end{array}
		\right.
\end{equation*}
For $j=2,3$, we denote $\eta^{t_j}=S^r(t_j)\eta_0$ and define $t_{j+1}=t_j+t_r(\eta^{t_j})$ so that
\[
	S^r(t)\eta_0 := S(t-t_j) \eta^{t_j} \quad\text{for}\quad t, \quad
	t_j \leq t < t_{j+1}
\]
\vspace{-1.4em}
\begin{equation*}
		\left( S^r(t_{j+1})\eta_0 \right)(x) := \left \{
		\begin{array}{l}
			\eta_a \quad\text{for}\quad x \in D \\
			S \left( t_r(\eta^{t_j}) \right)\eta^{t_j}
			\quad\text{for}\quad x \in \Omega\backslash D.
		\end{array}
		\right.
\end{equation*}
By (S3), (S6) together with (S4), we see that $\lim_{j\to\infty}t_j=\infty$.
 It is easy to see that Theorem \ref{Tmain} can be generalized as
\begin{thm} \label{Tgen}
Assume that (S1)--(S6).
 Assume, moreover, that $S(t)(s+B)\leq s+B$ for all $B\geq B_0$ and $t\geq0$ with some $B_0>0$.
 Assume, furthermore, that
\[
	\left\{ S(t)z \bigm| z \in U,\ t>\delta \right\}
\]
is relatively compact in $L^\infty(\Omega)$ for any bounded set $U$ and $\delta>0$.
 Then, there is a periodic-in-time evolution $S^r(t)\eta_*$ with rupture.
 In other words, there exists $T$ and $\eta_*\in L^\infty(\Omega)$ such that
\[
	S^r(t+T) \eta_* = S^r(t)\eta_*
\] 
for all $t>0$.
\end{thm}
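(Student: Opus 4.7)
The plan is to repeat the Schauder fixed-point strategy from the proof of Theorem \ref{Tmain} in this abstract setting. Choose $B \geq B_0$ large enough that $\inf_\Omega (s+B) > \eta_a$, and set
\[
    E := \left\{ \xi \in L^\infty(\Omega \setminus D) \bigm| s \leq \xi \leq s+B \right\},
\]
which is a closed convex subset of $L^\infty(\Omega \setminus D)$. For $\xi \in E$, glue $\eta_0(\xi) := \eta_a$ on $D$ and $\xi$ on $\Omega \setminus D$, set $t_r(\xi) := t_r(\eta_0(\xi))$, and define a Poincar\'e-type map $\mathcal{T} : E \to L^\infty(\Omega \setminus D)$ by
\[
    (\mathcal{T}\xi)(x) := \bigl(S(t_r(\xi))\eta_0(\xi)\bigr)(x), \quad x \in \Omega \setminus D.
\]
A fixed point $\xi_*$ of $\mathcal{T}$ yields, via the definition of $S^r$, a periodic-in-time evolution with rupture starting at $\eta_* := \eta_0(\xi_*)$ and period $T = t_r(\xi_*)$.

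First I would check $\mathcal{T}(E) \subseteq E$. By (S5) together with the choice of $B$ and $\eta_a$, we have $s \leq \eta_0(\xi) \leq s+B$ pointwise on $\Omega$. The order-preservation property (S3), the identity $S(t)s = s$ from (S4), and the new hypothesis $S(t)(s+B) \leq s+B$ then sandwich $S(t)\eta_0(\xi)$ between $s$ and $s+B$ for all $t \geq 0$, so the restriction of $S(t_r(\xi))\eta_0(\xi)$ to $\Omega \setminus D$ lies in $E$. Next I would obtain uniform two-sided bounds $t_+ \leq t_r(\xi) \leq T^*$ on $E$: the lower bound is (S6) applied to $\inf \eta_0(\xi) - \eta_c \geq \min(\eta_a,\inf_{\Omega\setminus D} s)-\eta_c > 0$, which is uniform in $\xi\in E$; the upper bound comes from (S4) applied to the fixed upper envelope $s+B$, combined with $\inf_D s < \eta_c$ from (S5) and (S3), which together force rupture by some time $T^*$ depending only on $E$. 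Compactness of $\mathcal{T}(E)$ in $L^\infty(\Omega \setminus D)$ then follows immediately from the compactness hypothesis applied to the bounded set $U := \{\eta_0(\xi) : \xi \in E\}$ and $\delta := t_+$, because every $\mathcal{T}\xi$ is the restriction of a member of the relatively compact set $\{S(t)z : z \in U,\ t_+ \leq t \leq T^*\}$.

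The main obstacle is the continuity of $\mathcal{T}$ on $E$, which reduces to continuous dependence of $\xi \mapsto t_r(\xi)$ and of $\xi \mapsto S(t_r(\xi))\eta_0(\xi)$. My plan is a sequential argument: for $\xi_n \to \xi$ in $L^\infty(\Omega \setminus D)$, use the uniform bounds and the compactness hypothesis to extract a subsequence with $t_r(\xi_{n_k}) \to t^* \in [t_+,T^*]$ and $S(t_r(\xi_{n_k}))\eta_0(\xi_{n_k}) \to \zeta$ in $L^\infty(\Omega)$. I would then identify $\zeta = S(t^*)\eta_0(\xi)$ by a sandwich between $S(t^*)(\eta_0(\xi) \pm \|\xi_n - \xi\|_\infty)$, invoking (S3) and (S2) to pass to the limit, and rule out $t^* \neq t_r(\xi)$ by combining (S5) with the strict separation $\inf_{\Omega \setminus D} s > \eta_c$, which prevents the rupture set from escaping $\overline{D}$ in any limit and forces $t^*$ to coincide with the first time the limit reaches the threshold $\eta_c$. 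This step is delicate because only $*$-weak continuity at $t=0$ is assumed, so the uniform lower bound $t_+$ on rupture times is essential to stay in the regime $t \geq \delta > 0$ where the compactness hypothesis provides strong continuity. Once continuity is secured, Schauder's theorem (cf.\ \cite[Theorem 11.1]{GT}) produces the required fixed point of $\mathcal{T}$ and hence the periodic evolution.
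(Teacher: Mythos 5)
The paper does not write out a proof of Theorem~\ref{Tgen}; it states that Theorem~\ref{Tmain} ``can be generalized as'' this statement, so the implicit strategy is precisely the one you follow: transplant the Poincar\'e map $\mathcal{T}$ and the Schauder fixed-point argument into the abstract setting, using (S3) and the extra invariance hypothesis to show $\mathcal{T}(E)\subseteq E$, (S4)--(S6) for uniform two-sided bounds on $t_r(\xi)$, and the abstract compactness hypothesis in place of Lemma~\ref{Lreg}. Your treatment of invariance, the two-sided bounds on $t_r$, and compactness of $\mathcal{T}(E)$ is correct and matches what the proof of Theorem~\ref{Tmain} would give after translation.

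The one place where your argument does not close is the continuity of $\mathcal{T}$, and I think you sense this (``this step is delicate''). In Theorem~\ref{Tmain} continuity is obtained from the explicit $L^\infty$-nonexpansiveness
\[
\left\|S(t)\eta_{01}-S(t)\eta_{02}\right\|_{L^\infty}\le\left\|\eta_{01}-\eta_{02}\right\|_{L^\infty},
\]
which comes from the maximum principle for the heat semigroup. This property is \emph{not} among (S1)--(S6). Your proposed substitute is the sandwich
\[
S(t)\bigl(\eta_0(\xi)-\varepsilon_n\bigr)\ \le\ S(t)\eta_0(\xi_n)\ \le\ S(t)\bigl(\eta_0(\xi)+\varepsilon_n\bigr),\qquad \varepsilon_n=\|\xi_n-\xi\|_\infty,
\]
valid by (S3). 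But to conclude $S(t)\eta_0(\xi_n)\to S(t)\eta_0(\xi)$ you would still need to know that $S(t)(\eta_0(\xi)\pm\varepsilon_n)\to S(t)\eta_0(\xi)$ in $L^\infty$ as $\varepsilon_n\downarrow 0$, which is exactly the continuity of $S(t)$ in the initial data that you are trying to prove --- the argument is circular. The hypothesis (S2) is of no help here: it addresses continuity in $t$ at $t=0$, not continuity in the initial datum. The compactness hypothesis gives you a convergent subsequence of $S(t_r(\xi_{n_k}))\eta_0(\xi_{n_k})$, but without initial-data continuity of $S(t)$ (or at least sequential weak-$*$ continuity of $z\mapsto S(t)z$) there is no way to identify its limit as $S(t^*)\eta_0(\xi)$, and likewise no way to pass to the limit in the characterization of $t_r$. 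The same circularity would also block your identification of $t^*$ with $t_r(\xi)$. To make the proof go through one needs an additional standing assumption --- for instance that $S(t)$ is nonexpansive in $L^\infty$, or translation-invariant under addition of constants (which combined with (S3) yields nonexpansiveness), or at least sequentially continuous from $L^\infty$ to $L^\infty$ --- and then the rest of your outline is fine. This is a gap worth flagging, because the paper's own ``easy to see'' statement quietly relies on the same missing ingredient.
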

\begin{remark} \label{RSt} 
Our idea for proving the existence of a periodic solution based on compactness somewhat resembles that of \cite{ON} and \cite{GIK}.
 The existence of a periodic solution (rotating spiral type solution) for an Allen-Cahn type equation on an annulus was proved in \cite{ON}.
 The existence of a spiral type solution for a forced (weakly anisotropic) curvature flow equation on an annulus was proved in \cite{GIK}.
 In both cases, uniqueness and stability of a periodic solution have been established based on an abstract theory \cite{OM} for the strongly ordered dynamics.
 Unfortunately, our system does not satisfy their assumptions.
 Although there is 
 another abstract
 theory \cite{OHM} 
 for convergence to one of periodic solutions,
  our situation does not seem to fall in their setting because our mapping $\mathcal{T}$ is not order preserving;
 see Figure \ref{Fnot}.
\begin{figure}[htb]
\centering
	\begin{minipage}[b]{0.49\linewidth}
\centering
\includegraphics[width=7cm]{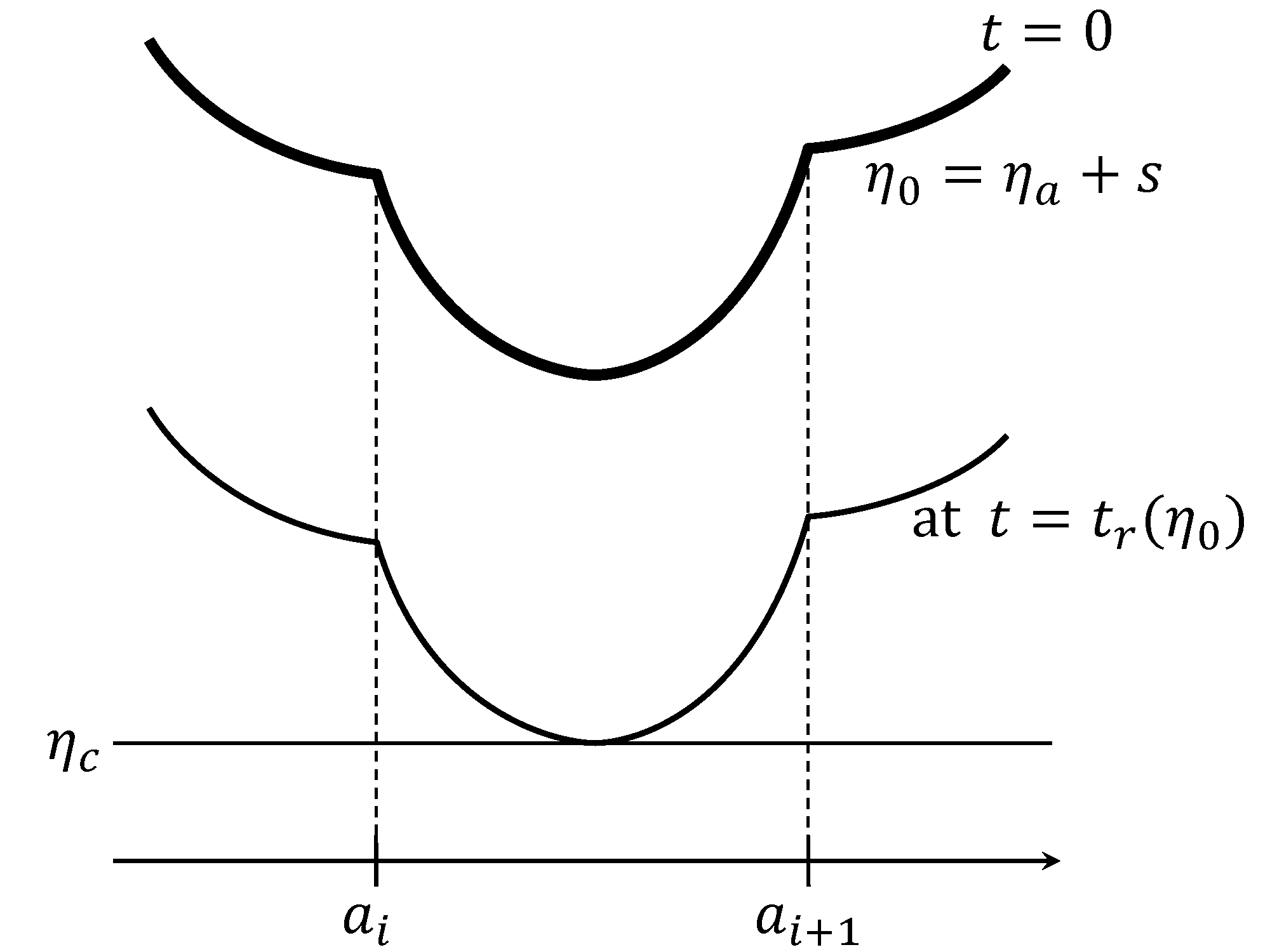}
	\end{minipage}
	\begin{minipage}[b]{0.49\linewidth}
\centering
\includegraphics[width=7cm]{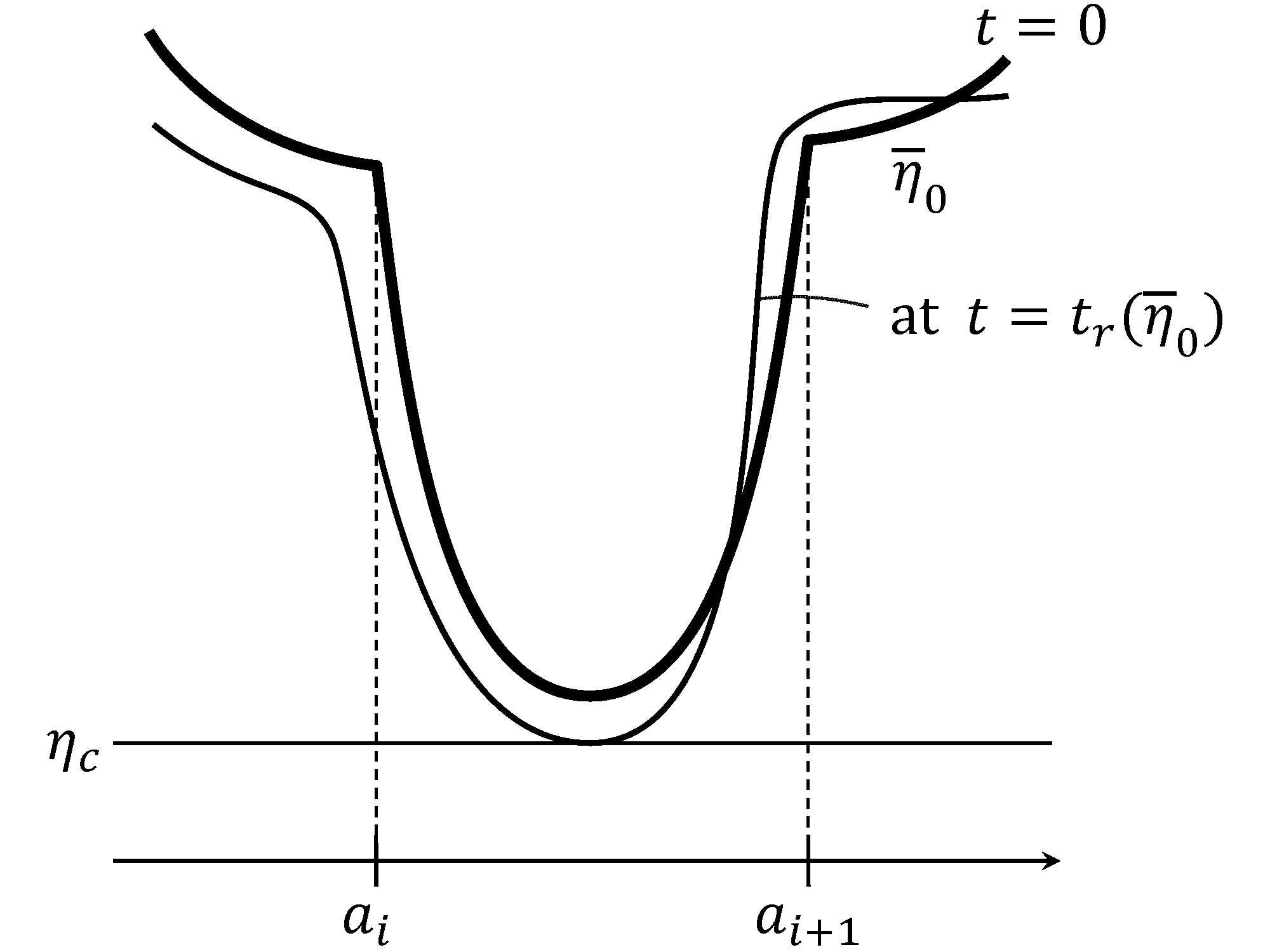}
	\end{minipage}
\caption{Initially $\eta_0\geq\overline{\eta}_0$ but at the rupture time the order is not preserved because $t_r(\eta_0)$ is far larger than $t_r(\overline{\eta}_0)$.} \label{Fnot}
\end{figure}
We do not know the stability of our periodic solution although it is likely by numerical experiment.
\end{remark}

\section{Numerical results} \label{S4}
In this section, we show some numerical results. For simplicity, we only consider the case $\tau=1$, $\omega=1$, $\eta_a = 0.03$, $d = 0.1$ and $c_k = 1$ for all $k=1, 2, \dots, K$. Furthermore, we let $A = \sum_{k=1}^Kc_k/\omega = K$ for all examples, then the assumption (C) is always satisfied. We employ the finite element method with Lagrange P1 element for spatial discretiazation and backward Euler method for temporal discretization.
\begin{exam}\label{Ex1}
The first example demonstrates the periodic solution $\eta$. Let $\sigma_2=\sigma_1/\tau=1$, then we have equation \eqref{Eeta}.  The domain $[0,1)$ is divided by $\{a_k\}_{k=1}^K = \{0.1, 0.6, 0.9\}$. The interval $[a_1,a_2] = [0.1,0.6]$ is significantly longer than the other intervals. Then it is expected that the assumption (S) is satisfied for sufficiently small $\alpha$ and $\eta_c$, that is, $s>\eta_c$ holds except on the longest interval. 
Here we check the numerical results $\eta$ for different initial conditions: $\eta(x,0) = \eta_a$ and $\eta(x,0) = \eta_a + \frac{\eta_a}{2}\sin(2\pi x)$.
We let $\alpha = 1.0$ and $\eta_c = \eta_a\times 10^{-12}$ for each case.
The numerical results are plotted in Figure \ref{Fig:ex1-1} and \ref{Fig:ex1-2}. 
Let $t_1<t_2<\cdots$ denote the sequence of rupture time as in Lemma \ref{Lsing}.
 For each figure, the left side shows the profile of the numerical solution one time step before $t_1<\cdots<t_{11}$, and the right side is one at $t_1<\cdots<t_{11}$.
 These figures show the convergence of the profile of $\eta$ at rupture time.
\begin{figure}[tb]
\begin{minipage}{.48\linewidth}
\centering
\includegraphics[scale=1.0]{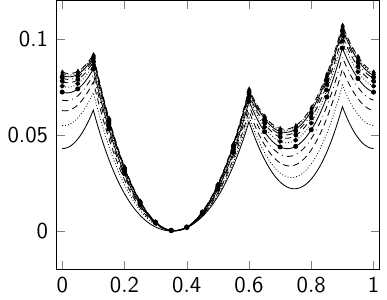}
\subcaption{$\eta$ before each rupture time}
\end{minipage}
\begin{minipage}{.48\linewidth}
\centering
\includegraphics[scale=1.0]{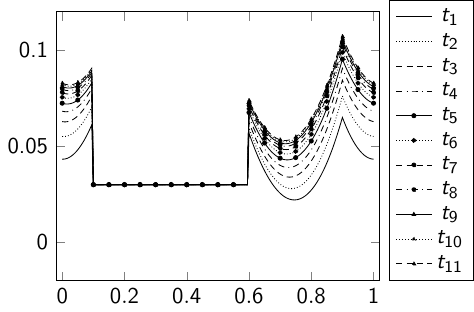}
\subcaption{$\eta$ at each rupture time}
\end{minipage}
\caption{Example \ref{Ex1}: $\eta(x,0) = \eta_a$}\label{Fig:ex1-1}
\end{figure}
\begin{figure}[tb]
\begin{minipage}{.48\linewidth}
\centering
\includegraphics[scale=1.0]{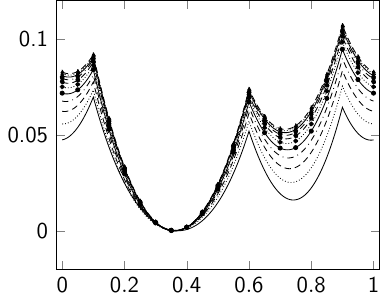}
\subcaption{$\eta$ before each rupture time}
\end{minipage}
\begin{minipage}{.48\linewidth}
\centering
\includegraphics[scale=1.0]{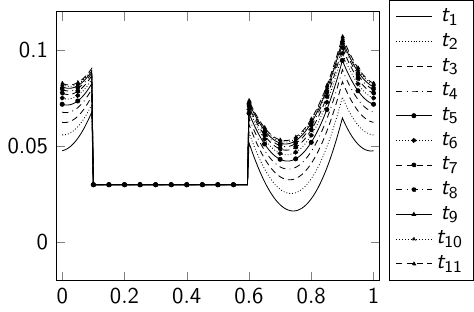}
\subcaption{$\eta$ at each rupture time}
\end{minipage}
\caption{Example \ref{Ex1}: $\eta(x,0) = \eta_a+\frac{\eta_a}{2}\sin(2\pi x)$}\label{Fig:ex1-2}
\end{figure}
In our notation, our numerical experiments indicate that $\mathcal{T}^m\eta_0$ converges to unique fixed point of $\mathcal{T}$ as $m\to\infty$ and the convergence looks monotone increasing.
\end{exam}
\begin{exam}\label{Ex2}
In this example, we also consider the case $\sigma_2=\sigma_1/\tau=1$ and $\{a_k\}_{k=1}^K = \{0.1, 0.6, 0.9\}$. 
In contrast to the previous example, we suppose that $\alpha$ and $\eta_c$ are large, then it is expected that the assumption (S) is not satisfied. Let $\alpha = 60$ and $\eta_c = \eta_a\times 10^{-1}$. 
Figure \ref{Fig:ex2} shows the numerical result $\eta$ just before each rupture time. We can find that the rupture set at each rupture time is no longer included in single interval, that is, the assumption of Lemma \ref{Lsing} does not hold. This example demonstrates the case that the assumption (S) is not satisfied, and the periodic solution may not exist. 
\end{exam}
\begin{figure}[tb]
\centering
\includegraphics[scale=1.0]{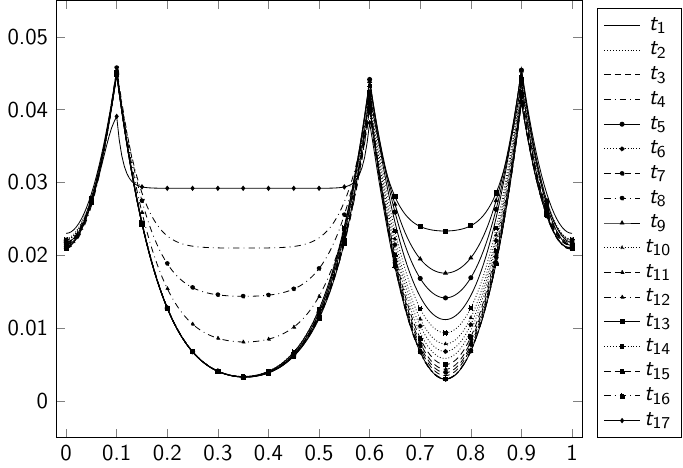}
\caption{Example \ref{Ex2}: $\eta$ before each rupture time}\label{Fig:ex2}
\end{figure}
\begin{exam}\label{Ex3}
The third example shows the numerical solution when $\sigma_1/\tau = 0.5$, $\sigma_2=1.0$, $\alpha = 1.0$ and $\eta_c = \eta_a\times 10^{-12}$. Since $\sigma_1/\tau \neq \sigma_2$, equation \eqref{Eeta} does not hold.
We have the numerical results $h$ and $\zeta$ by applying the numerical method for equation \eqref{Eh1} and \eqref{Eze}, respectively. 
Then $\eta = \zeta-h$ can be plotted for each time step. 
Figure \ref{Fig:ex3-1}, \ref{Fig:ex3-2} and \ref{Fig:ex3-3} show the graph of numerical solutions $h$, $\zeta$ and $\eta$ before the rupture time, respectively. 
The periodic behavior is not observed.
\begin{figure}[tb]
\centering
\includegraphics[scale=1.2]{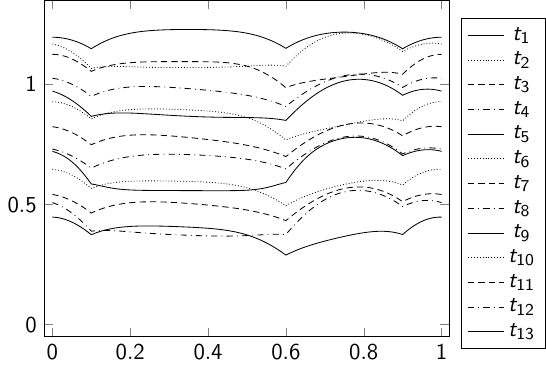}
\caption{Example \ref{Ex3}: graph of $h$ before each rupture time}\label{Fig:ex3-1}
\end{figure}
\begin{figure}[tb]
\centering
\includegraphics[scale=1.2]{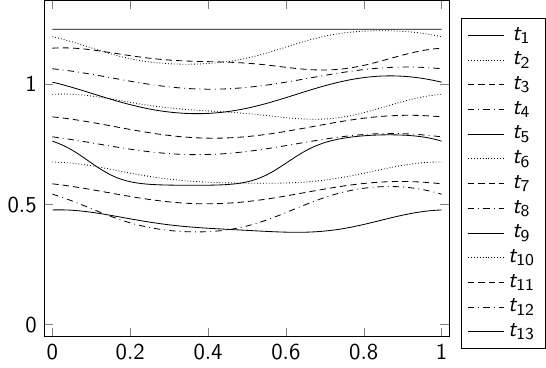}
\caption{Example \ref{Ex3}: graph of $\zeta$ before each rupture time}\label{Fig:ex3-2}
\end{figure}
\begin{figure}[tb]
\centering
\includegraphics[scale=1.2]{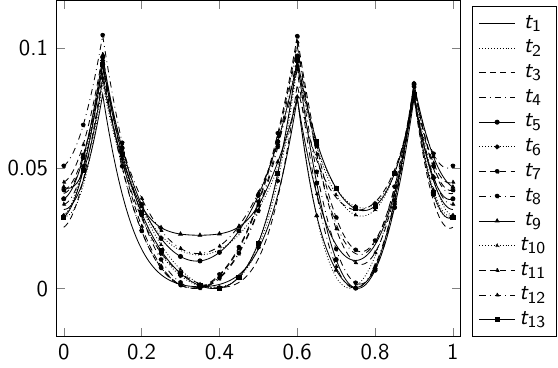}
\caption{Example \ref{Ex3}: graph of $\eta$ before each rupture time}\label{Fig:ex3-3}
\end{figure}
\end{exam}

\vspace{0.5cm}

\noindent
{\bf Acknowledgments.}\
The authors are grateful to Professor Elliott Ginder for informing them of \cite{SS} with valuable discussion.
 This work was done as a part of research activities of Social Cooperation Program ``Mathematical Sciences for Refrigerant Thermal Fluids'' sponsored by Daikin Industries, Ltd.\ at the University of Tokyo. The authors are grateful to members of the Technology and Innovation Center of Daikin Industries, Ltd.\ for showing several interesting phenomena related to rupture with fruitful discussion which triggered this work.
 The work of the first author was partly supported by the Japan Society for the Promotion of Science (JSPS) through the grants Kakenhi: No.~20K20342, No.~19H00639, and by Arithmer Inc., Daikin Industries, Ltd.\ and Ebara Corporation through collaborative grants.



\begin{thebibliography}{99}
%
\bibitem{EO}
S.~Esedo\={g}lu and F.~Otto, 
Threshold dynamics for networks with arbitrary surface tensions. 
\emph{Comm.\ Pure Appl.\ Math.}\ 68 (2015), no.~5, 808--864.
%
\bibitem{GIK}
Y.~Giga, N.~Ishimura and Y.~Kohsaka, 
Spiral solutions for a weakly anisotropic curvature flow equation.
\emph{Adv.\ Math.\ Sci.\ Appl.}\ 12 (2002), no.~1, 393--408.
%
\bibitem{GT}
D.~Gilbarg, N.~S.~Trudinger, 
Elliptic partial differential equations of second order. Reprint of the 1998 edition. Classics in Mathematics. 
\emph{Springer-Verlag, Berlin,} 2001. x\hspace{-0.1em}i\hspace{-0.1em}v+517 pp.
%
\bibitem{MNT}
C.~Mantegazza, M.~Novaga and V.~M.~Tortorelli, 
Motion by curvature of planar networks. 
\emph{Ann.\ Sc.\ Norm.\ Super.\ Pisa Cl.\ Sci.}\ (5) 3 (2004), no.~2, 235--324.
%
\bibitem{OHM}
T.~Ogiwara, D.~Hilhorst and H.~Matano, 
Convergence and structure theorems for order-preserving dynamical systems with mass conservation. 
\emph{Discrete Contin.\ Dyn.\ Syst.}\ 40 (2020), no.~6, 3883--3907.
%

\bibitem{OM}
T.~Ogiwara and H.~Matano, 
Monotonicity and convergence results in order-preserving systems in the presence of symmetry. 
\emph{Discrete Contin.\ Dynam.\ Systems} 5 (1999), no.~1, 1--34.
%
\bibitem{ON}
T.~Ogiwara and K.-I.~Nakamura, 
Spiral traveling wave solutions of some parabolic equations on annuli. 
Josai Math.\ Monogr., 2, 
\emph{Josai University, Graduate School of Science, Sakado,} 2000, 15--34.
%
\bibitem{SS}
R.~I.~Saye and J.~A.~Sethian,
Multiscale modeling of membrane rearrangement, drainage, and rupture in evolving foams. 
\emph{Science} 340 (2013), no.~6133, 720--724.
\end{thebibliography}
\end{document}